\newcommand{\mathsym}[1]{{}}
\newcommand{\unicode}[1]{{}}
\theoremstyle{plain}
\newtheorem{lemma}{Lemma}
\newtheorem{proposition}{Proposition}
\theoremstyle{definition}
\theoremstyle{remark}
\newtheorem{remark}{Remark}
\newcommand{\Z}{\mathbb Z}
\newcommand{\R}{\mathbb R}
\newcommand{\C}{\mathbb C}
\newcommand{\half}{\tfrac{1}{2}}
\newcommand{\quarter}{\tfrac{1}{4}}
\newcommand{\ssqrt}[2][]{ \hspace{-0.5pt} \sqrt[#1]{#2}}
\renewcommand{\leq}{\leqslant}
\renewcommand{\geq}{\geqslant}
\newcommand{\E}{\mathbb{E}}
\begin{document}


\title[Joint distribution of \dots at the soft edge of unitary ensembles]{Joint distribution of the first and second eigenvalues at the soft edge of unitary ensembles}
\author{N. S. Witte}
\address{Dept. of Mathematics and Statistics, University of Melbourne, Victoria 3010, Australia}
\email{n.witte@ms.unimelb.edu.au}
\author{F. Bornemann}
\address{Zentrum Mathematik -- M3, Technische Universität München,
      80290~München, Germany}
\email{bornemann@tum.de}
\author{P. J. Forrester}
\address{Dept. of Mathematics and Statistics, University of Melbourne, Victoria 3010, Australia}
\email{p.forrester@ms.unimelb.edu.au}
\date{\today}


\begin{abstract}
The density function for the joint distribution of the first and second eigenvalues at the soft edge of 
unitary ensembles is found in terms of a Painlev\'e II transcendent and its associated isomonodromic
system. As a corollary, the density function for the spacing between these two eigenvalues is similarly
characterized.The particular solution of Painlev\'e II that arises is a double shifted B\"acklund
transformation of the Hasting-McLeod solution, which applies in the case of the distribution of 
the largest eigenvalue at the soft edge. Our deductions are made by employing the hard-to-soft edge
transitions to existing results for the joint distribution of the first and second eigenvalue 
at the hard edge \cite{FW_2007}. In addition recursions under  $a \mapsto a+1$ of quantities specifying 
the latter are obtained. A Fredholm determinant type characterisation is used to provide accurate numerics
for the distribution of the spacing between the two largest eigenvalues.
\end{abstract} 

\subjclass[2000]{15A52, 33C45, 33E17, 42C05, 60K35, 62E15}
\keywords{random matrices, eigenvalue distribution, Wishart matrices, Painlev\'e equations, isomonodromic deformations}

\maketitle


\section{Introduction}\label{newIntro}
\setcounter{equation}{0}

Fundamental to random matrix theory and its applications is the soft edge scaling limit of unitary invariant ensembles.
As a concrete example, consider the Gaussian unitary ensemble, specified by the measure on complex Hermitian matrices
$H $ proportional to $\exp(-\mbox{Tr}\,H^2)({\rm d} { H})$. This measure is unchanged by the mapping 
${ H} \mapsto U  H U^{\dagger}$, for $U$ unitary, and is thus a unitary invariant. To leading order the support 
of the spectrum is $(-\ssqrt{2N},\ssqrt{2N})$, although there is a nonzero probability of eigenvalues in 
$(-\infty, -\ssqrt{2N}) \cup (\ssqrt{2N},\infty)$, and for this reason the neighbourhood of $\ssqrt{2N}$ (or $-\ssqrt{2N}$)
is referred to as the soft edge. Moreover, upon the scaling of the eigenvalues $\lambda_{\ell} \mapsto \ssqrt{2N} + X_{\ell}/\ssqrt{2}N^{1/6}$, 
the mean spacing between eigenvalues in the neighbourhood of the largest eigenvalue is of order unity. Taking the $N \rightarrow \infty$ limit 
with this scaling gives a well-defined statistical mechanical state, which is an example of a determinantal point process, and defined in terms of its
$k$-point correlation functions by 
\begin{align}
\rho_{(k)}^{\rm soft}(x_1,\ldots,x_k) = \det \left[ K^{\rm soft} (x_j,x_{\ell}) \right]_{j,\ell = 1,\ldots,k} \ ,  
\label{eq:2f}
\end{align}
where $K^{\rm soft}$ -- referred to as the correlation kernel -- is given in terms of Airy functions by
\begin{align}
K^{\rm soft}(x,y):= \frac{ {\rm Ai}(x) {\rm Ai}^{\prime}(y) - {\rm Ai}(y) {\rm Ai}^{\prime}(x)   } {x-y} .    
\label{eq:2fa}
\end{align}

The determinantal form \eqref{eq:2f} implies that in the soft edge scaled state, the probability of there being 
no eigenvalues in the interval $(s,\infty)$, is given by \cite{Fo_1993}
\begin{align}
 {\rm E}_2^{\rm soft}(0;(s,\infty)) 
       & =   1 + \sum_{k=1}^{\infty} \frac{ (-1)^k}{k!} \int_{s}^{\infty} {\rm d}x_1 \ldots \int_{s}^{\infty} {\rm d}x_k  \ \rho_{(k)}^{\rm soft}(x_1,\ldots,x_k) , \notag \\
       & =    \det \left(1 - {\mathbb K}^{\rm soft}_{(s,\infty)}\right),													 
\label{eq:2g}
\end{align}
where $ {\mathbb K}^{\rm soft}_{(s,\infty)} $ is the integral operator on $(s,\infty)$ with kernel $K^{\rm soft}(x,y)$ (as given in \eqref{eq:2fa}). The first 
equality in \eqref{eq:2g} is generally true for a one-dimensional point process, while the second equality follows from the Fredholm theory 
\cite{WW_1958}.
The structure of the kernel \eqref{eq:2fa} makes it of a class referred to as integrable \cite{IIKS_1990}, and generally this class of integrable 
kernels have intimate connections to integrable systems. Indeed one has that \cite{TW_1994a}
\begin{align}
 \det \left(1 - {\mathbb K}^{\rm soft}_{(s,\infty)}\right) = 
 	\exp\left( - \int_s^{\infty}(t-s) q^2(t) {\rm d}t \right),	 
\label{eq:3.1} 
 \end{align}
where $q(t) $ satisfies the particular Painlev\'e II ordinary differential equation ($\;\dot{}\equiv d/dt$)
\begin{align}
 \ddot{q} & = 2q^3+tq ,
\label{eq:4.1} \\
\noalign{\hbox{subject to the boundary condition}}
q(t) & \mathop{\sim}_{t \rightarrow \infty} {\rm Ai}(t).				 
\label{eq:4.1a}
\end{align}

Our interest in this paper is in the joint distribution of the largest and second largest eigenvalue at the soft edge, and the corresponding 
distribution of the spacing between them. Let $p_{(2)}^{\rm soft}(x_1,x_2)$, $x_1>x_2$, denote the density function of the joint distribution. 
Then analogous to the first equality in \eqref{eq:2g} we have

\begin{multline}
 p_{(2)}^{\rm soft}(x_1,x_2)  = \det 	\begin{bmatrix} K^{\rm soft}(x_1,x_1) &  K^{\rm soft}(x_1,x_2) \\
					  K^{\rm soft}(x_2,x_1) & K^{\rm soft}(x_2,x_2)  \end{bmatrix}  
\\
 + \sum_{k=1}^{\infty} \frac{ (-1)^k}{k!} \int_{x_2}^{\infty} {\rm d}y_1 \cdots \int_{x_2}^{\infty} {\rm d}y_k  
\\ \times
   \det \begin{bmatrix} K^{\rm soft}(x_1,x_1) &  K^{\rm soft}(x_1,x_2)  & \left[ K^{\rm soft}(x_1,y_{\ell}) \right]_{\ell=1,\ldots,k}  \\
		K^{\rm soft}(x_2,x_1) &  K^{\rm soft}(x_2,x_2)  & \left[ K^{\rm soft}(x_2,y_{\ell}) \right]_{\ell=1,\ldots,k}  \\
		\left[ K^{\rm soft}(y_j,x_1) \right]_{j=1,\ldots,k}  & \left[ K^{\rm soft}(y_j,x_2) \right]_{j=1,\ldots,k} & \left[ K^{\rm soft}(y_j,y_{\ell}) \right]_{j,\ell=1,\ldots,k}
	\end{bmatrix} .   
\label{eq:c1} 
\end{multline}
With  $A^{\rm soft}(s)$ denoting the density function for the spacing between the two largest eigenvalues we have 
\begin{equation}
A^{\rm soft}(s) = \int_{- \infty}^{\infty} {\rm d}x \, p^{\rm soft}_{(2)}(x+s,x). 		 \label{eq:c2}
\end{equation}   
					  
We seek to characterize \eqref{eq:c1} and \eqref{eq:c2} in a form analogous to \eqref{eq:3.1}. This involves functions which are components of a solution 
of a particular isomonodromic problem relating to the PII equation. Such characterizations have appeared in other problems in random matrix 
theory and related growth processes \cite{BR_2000}, \cite{FW_2007}, \cite{CIK_2010}, \cite{S_2012}. 

The starting point for us is our earlier study \cite{FW_2007} specifying the joint distribution of the first and second smallest eigenvalues, and 
the corresponding spacing distribution between these eigenvalues, at the hard edge of unitary ensembles. In random matrix theory the latter 
applies when the eigenvalue density is strictly zero on one side of its support, and is specified by the determinantal point process with correlation
kernel
\begin{align}
K^{{\rm hard,} \, a}(x,y) =   \frac{\ssqrt{y}J_a(\ssqrt{x})J'_a(\ssqrt{y})-\ssqrt{x}J'_a(\ssqrt{x})J_a(\ssqrt{y})}{2(x-y)} ,    
\label{eq:1.31}
\end{align} 
where $x,y>0$. Note the dependence on the parameter $a$ ($a>-1$) which physically represents a repulsion from the origin. The relevance to 
the study of the soft edge is that upon the scaling 
\begin{align}
x \mapsto a^2 [ 1-2^{2/3}a^{-2/3}x ] ,
\label{eq:1.31a}
\end{align}
(and similarly $y$), as $a \rightarrow \infty$ the hard edge kernel \eqref{eq:1.31} limits to the soft edge kernel, and consequently the 
hard edge state as defined by its correlation functions limits to the soft edge state \cite{BF_2003}. Thus our task is to compute this limit in the 
expressions from \cite{FW_2007}. Moreover, recurrences under the mapping of the latter will be specified.

In Section~\ref{S:HEdbn} the evaluation of the joint distribution of the first and second eigenvalue at the hard edge from \cite{FW_2007} is revised.
This involves quantities relating to the Hamiltonian formulation of the Painlev\'e III$^{\prime}$ equation, and to an isomonodromic problem for 
the generic Painlev\'e III$^{\prime}$ equation. Details of these aspects are discussed in separate subsections, with special emphasis placed on the
transformation of the relevant quantities under the mapping $a \mapsto a +1$. Second order recurrences are obtained. In Subsection \ref{SS:integer_a}
initial conditions for these recurrences are specified. Section~\ref{S:H2S} is devoted to the computation of the hard-to-soft edge scaling of the quantities 
occurring in the evaluation of the joint distribution of the first and second eigenvalue at the hard edge. This allows us to evaluate the joint 
distribution of the first and second eigenvalues at the soft edge in terms of a Painlev\'e II transcendent and its associated isomonodromic system.

\section{\sc Hard edge $a>0$ joint distribution of the first and second eigenvalues}\label{S:HEdbn}
\setcounter{equation}{0}
\subsection{The result from {\bf \cite{FW_2007}}}
Let $p_{(2)}^{{\rm hard,} \, a} (x_1,x_2)$, $x_2>x_1$ denote the joint distribution of the smallest and second smallest eigenvalues at the 
hard edge with unitary symmetry. It was derived in \cite{FW_2007} that
\begin{multline}
 p_{(2)}^{{\rm hard,} \, a}(s-z,s)
\\
			= \frac{z^2 s^a (s-z)^a e^{-s/4}} {4^{2a+3} \Gamma(a+1) \Gamma(a+2) \Gamma^2(a+3) }
				\exp\left( \int_0^s \frac{{\rm d}r}{r}  \left[ \nu(r) + 2C(r) \right] \right) \left(u \partial_z v - v \partial_z u \right) .
\label{eq:1.16}
\end{multline}
Here $\nu(s)$ is the solution of the second-order, second-degree ODE (${}^\prime \equiv {\rm d}/{\rm d}s$) -- a variant of the $\sigma$-form of the third
Painlev\'e equation, \cite[Eq.~(5.25)]{FW_2007}
\begin{equation}
 s^2(\nu'')^2-(a+2)^2(\nu')^2+\nu'(4\nu'-1)(s \nu'-\nu)+\tfrac{1}{2}a(a+2)\nu'-\tfrac{1}{16}a^2 = 0 .   
\label{PIIIsigma}
\end{equation} 
Important to our subsequent workings is the fact that $p_{(1)}^{{\rm hard,} \, a}(s)$ -- the probability density function for the smallest eigenvalue
at the hard edge of an ensemble with unitary symmetry -- can be expressed in terms of $\nu(s)$ by \cite{FW_2002a}, \cite[Eq.~(8.93)]{rmt_Fo}
\begin{align}
p_{(1)}^{{\rm hard,} \, a}(s)  = \frac{s^a}{ 2^{2a+2} \Gamma(a+1)\Gamma(a+2) } \exp \left( \int_0^s \left( \nu(t) - \frac{t}{4} \right) 
		 \frac{ {\rm d}t} {t} \right).
\label{eq:x1}
\end{align}
 
To define $C(s)$, introduce the auxiliary quantity $\mu = \mu(s)$ according to \cite[Eq.~(5.25)]{FW_2007},
\begin{align}
 \mu + s = 4s \nu^{\prime}.
\label{HE_DEnu}
\end{align}
Then, according to \cite[Eq.~(5.20)]{FW_2007}, $C$ is specified by 
\begin{align}
   2C+a+3 = s\frac{{\mu'}-2}{\mu} .    
\label{HE_Crel}
\end{align}
These quantities are closely related to the Hamiltonian variables of Okamoto's theory for PIII$^{\prime}$, as will be seen
subsequently.
 
The variables $u(z;s)$ and $v(z;s)$ are the components of a solution to the associated isomonodromic problem for the
generic third Painlev\'e equation or the degenerate fifth Painlev\'e equation. They satisfy the Lax pair \cite[Eqs~(5.34-7)]{FW_2007},
on that domain $s>z$, $s,z \in \mathbb{R}$, with real $a>-1$, $a\in \mathbb{R}$,
\begin{align}
 z(s-z)\partial_{z}u & = -C z u-(\mu +z)v,
\label{HE_S:a}\\
 z(s-z)\partial_{z}v & = -z\left[\xi +\tfrac{1}{4}(z-s)\right] u+[-2s+(C+a+2)z]v ,
\label{HE_S:b} 
\end{align}
and
\begin{align}
  (s-z)s\partial_su & =  zCu+(\mu+s)v ,
\label{HE_D:a} \\
  (s-z)s\partial_sv & = z\xi u-[s(2C+a)-zC]v ,
\label{HE_D:b}
\end{align}
where $\xi$ is a further auxiliary quantity specified by (\cite[Eq.~(5.19)]{FW_2007})
\begin{align}
   \xi = -\frac{sC(C+a)}{\mu+s}.        
\label{HE_xidefn}    
\end{align}

For \eqref{HE_S:a}--\eqref{HE_D:b} to specify a unique solution appropriate boundary conditions must 
be specified. Their explicit form can be found in \cite{FW_2007}.
 
\subsection{Okamoto PIII$^{\prime}$ theory} 
 We seek to make the links to the Hamiltonian theory of the third Painlev\'e equation in order to draw upon the results of Okamoto \cite{Ok_1987c}, 
\cite{Ok_1987b} and the work by Forrester and Witte \cite{FW_2002a}. As given in these works the Hamiltonian theory of Painlev\'e III' can formulated in the 
variables $ \{q,p;s,H\} $ where the Hamiltonian itself is given by ( $ ' \equiv d/ds $)
\begin{equation}
  sH = q^2 p^2 - (q^2 + v_1q - s)p + \tfrac{1}{2}(v_1 + v_2) q.
\label{HIII}
\end{equation} 
With $H$ so specified the corresponding Hamilton equations of motion are
\begin{align}
  sq' & = 2q^2p - (q^2 + v_1 q - s) ,
\label{EoMIII:a}\\
  sp' & = -2qp^2+(2q+v_1)p-\tfrac{1}{2}(v_1+v_2) .
\label{EoMIII:b}
\end{align}

From these works its known that the canonical variables can be found from the time evolution of the
Hamiltonian itself by
\begin{align}
  p & = h' + \tfrac{1}{2} ,
\label{HamIII:a}\\
  q & = \frac{sh'' - v_1 h' + \frac{1}{2} v_2 }{\frac{1}{2}(1 - 4 (h')^2)} ,
\label{HamIII:b}
\end{align}
where
\begin{equation}\label{H4}
  h = sH + \tfrac{1}{4} v_1^2 - \tfrac{1}{2} s.
\end{equation}
In turn the Painlev\'e III' $\sigma$-function is related to the Hamiltonian by
\begin{equation}
   \sigma_{\rm III'}(s) := - (sH) \big|_{s \mapsto s/4} - \tfrac{1}{4}v_1(v_1 - v_2) + \tfrac{1}{4}s .
\end{equation} 
 
 In the work \cite{FW_2007} (see Prop. 5.21) the identification made with the Painlev\'e III' system gave the parameter
correspondence $ v_1 = a+2$, $v_2 = a-2 $ and  
\begin{equation}
   \nu(s) = -\sigma_{\rm III'}(s)+\tfrac{1}{4}s-a-2 .
\label{HE_sigma-nu}
\end{equation} 
The quantity $C$ appearing in \eqref{eq:1.16} and the auxiliary quantities  $\mu$ and $\xi $ can be related to $p$ and $q$ in the corresponding
 Hamiltonian system. 
 
\begin{proposition}  
The variables $ \mu, C, \xi $ are related to the canonical Painlev\'e III' co-ordinates by
\begin{align}
  \frac{\mu}{s} & = (p-1)\big|_{s \mapsto s/4} ,
\label{III_mu}\\
   C & = -qp\big|_{s \mapsto s/4} ,
\label{III_C}\\
  \xi & = q(a-qp)\big|_{s \mapsto s/4} .
\label{III_xi}
\end{align}
\end{proposition}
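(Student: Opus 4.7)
The plan is to express $\mu$, $C$, $\xi$ explicitly in terms of $h$ and its derivatives via the identification \eqref{HE_sigma-nu} and the defining relations \eqref{HE_DEnu}, \eqref{HE_Crel}, \eqref{HE_xidefn}, and then match the resulting expressions against the formulas \eqref{HamIII:a}--\eqref{HamIII:b} for the canonical coordinates $p,q$, using the parameter correspondence $v_1=a+2$, $v_2=a-2$.

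First I would eliminate $\sigma_{\rm III'}$. Combining \eqref{HE_sigma-nu} with \eqref{H4} and the relation $\sigma_{\rm III'}(s)=-(sH)|_{s\mapsto s/4}-\tfrac14 v_1(v_1-v_2)+\tfrac14 s$, a direct substitution (the constants $\tfrac14 v_1(v_1-v_2)=a+2$ cancel the $-(a+2)$ in \eqref{HE_sigma-nu}) gives the clean identity
\begin{equation*}
\nu(s) = h(s/4) - \tfrac14 (a+2)^2 + \tfrac{s}{8}.
\end{equation*}
Differentiation yields $4\nu'(s)-1 = h'(s/4)-\tfrac12 = (p-1)|_{s\mapsto s/4}$, and then \eqref{HE_DEnu} in the form $\mu/s=4\nu'-1$ delivers \eqref{III_mu} at once.

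Next I would verify \eqref{III_C}. Writing $t=s/4$, the first step gives $\mu(s)=4t(p(t)-1)$, hence $\mu'(s)-2 = h'(t)+t h''(t)-\tfrac52$, so that
\begin{equation*}
\frac{s(\mu'(s)-2)}{\mu(s)} = \frac{t h''(t)+h'(t)-\tfrac52}{h'(t)-\tfrac12}.
\end{equation*}
On the other hand, \eqref{HamIII:b} with the factorisation $1-4(h')^2=-4(h'-\tfrac12)(h'+\tfrac12)$ and $p=h'+\tfrac12$ gives $-2qp=\bigl(th''-(a+2)h'+\tfrac{a-2}{2}\bigr)/(h'-\tfrac12)$. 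A brief algebraic simplification shows that $-2qp+a+3$ equals the previous display, and comparison with \eqref{HE_Crel} yields \eqref{III_C}.

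Finally, for \eqref{III_xi} I would simply substitute \eqref{III_mu} and \eqref{III_C} into the definition \eqref{HE_xidefn}: since $\mu+s=s p|_{s\mapsto s/4}$, the factor $p$ cancels in $\xi=-sC(C+a)/(\mu+s)$ and one is left with $\xi=q(a-qp)|_{s\mapsto s/4}$. The only mildly delicate point in the whole argument is step two, keeping careful track of the chain-rule factor $1/4$ coming from the rescaling $s\mapsto s/4$ and verifying that the constant terms $\tfrac{a-2}{2}$, $-\tfrac{a+3}{2}$ combine to produce exactly $-\tfrac52$; everything else is a bookkeeping exercise.
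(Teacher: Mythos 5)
Your proposal is correct and follows essentially the same route as the paper: establish $\nu(s)=h(s/4)-\tfrac14(a+2)^2+\tfrac{s}{8}$, differentiate and use \eqref{HE_DEnu} with \eqref{HamIII:a} for \eqref{III_mu}, combine \eqref{HE_Crel} with \eqref{HamIII:b} for \eqref{III_C}, and substitute into \eqref{HE_xidefn} for \eqref{III_xi}. The only cosmetic difference is that the paper packages the second step as the identity $4s^2\nu''=2s+(2C+a+2)\mu$ whereas you compute $s(\mu'-2)/\mu$ directly in terms of $h$; the chain-rule factors and constant bookkeeping in your version check out.
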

\begin{proof}
From Eq. (5.21) of \cite{FW_2007} and \eqref{HE_sigma-nu} we compute that
\begin{equation}
  \nu(s) = h(s/4) - \tfrac{1}{4}(a+2)^2 + \tfrac{1}{8}s .
\end{equation} 
Differentiating this and employing the relations \eqref{HE_DEnu} and \eqref{HamIII:a} we find \eqref{III_mu}.
Using \eqref{HE_Crel} we note that $ 4s^2{\nu''} = 2s+(2C+a+2)\mu $ and with the above equation and
\eqref{HamIII:b} we deduce \eqref{III_C}. Equation~\eqref{III_xi} then follows from \eqref{HE_xidefn}.
\end{proof}

For the Hamiltonian (\ref{HIII}), Okamoto \cite{Ok_1987c} has identified two Schlesinger
transformations with the property
\begin{equation}\label{T}
  T_1(v_1, v_2) = (v_1+1, v_2+1),  \qquad
  T_2(v_1, v_2) = (v_1+1, v_2 - 1),
\end{equation}
and has furthermore specified the corresponding mapping of $p$ and $q$. Recalling $(v_1,v_2)$ in terms of $a$ above 
\eqref{HE_sigma-nu}, we see that in the present case $T_1$ corresponds to $a \mapsto a + 1$. Reading from \cite{FW_2002a}
Eqs. (4.40-3) gives the following result.

\begin{proposition}[{\cite[Eqs. (4.40-3)]{FW_2002a}}]\label{recurHam}    
The Painlev\'e III' canonical variables $ q[a](s), p[a](s) $ satisfy coupled recurrence relations in $ a $
\begin{align}
 q[a+1] & = -\frac{s}{q[a]}+\frac{(a+1)s}{q[a]\left(q[a]\left(p[a]-1\right)-2\right)+s} ,
\\
 p[a+1] & = \frac{1}{s}q[a]\left(q[a]\left(p[a]-1\right)-2\right)+1 .
\end{align}
The reader should note that we haven't made the scale change $ s \mapsto s/4 $ here.
The initial conditions are given by \eqref{H_a=0} below for the sequence $ a \in \Z_{\geq 0} $.
\end{proposition}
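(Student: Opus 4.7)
The plan is to specialize Okamoto's Schlesinger transformation $T_1$ \cite{Ok_1987c}, recalled in \eqref{T}, to the parameter correspondence $(v_1,v_2)=(a+2,a-2)$ fixed above \eqref{HE_sigma-nu}. Under this identification the shift $T_1:(v_1,v_2)\mapsto(v_1+1,v_2+1)$ is precisely $a\mapsto a+1$, so the explicit rational expressions for $T_1(q)$ and $T_1(p)$ in Okamoto's work translate directly into the two stated recurrences once $v_1=a+2$ and $v_2=a-2$ are substituted.

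Concretely, I would proceed in three steps. First, transcribe Okamoto's formulas for $T_1(q),\,T_1(p)$ as functions of $(q,p,s,v_1,v_2)$, as already done in \cite[Eqs.~(4.40)--(4.43)]{FW_2002a}. Second, substitute $v_1=a+2,\,v_2=a-2$ and simplify; since $\tfrac12(v_1+v_2)=a$, the shifted parameter value $a+1$ emerges as the coefficient in the second term of the $q[a+1]$ formula, while $T_1(p)$ becomes the polynomial expression claimed for $p[a+1]$. Third, cross-check the result by verifying that the pair $(q[a+1],p[a+1])$ so produced satisfies the Hamilton system \eqref{EoMIII:a}-\eqref{EoMIII:b} with parameters $(v_1+1,v_2+1)$, using the level-$a$ equations of motion to eliminate $q'$ and $p'$ from the identity to be checked; this reduces everything to a rational identity in $(q[a],p[a],s)$.

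The main obstacle is largely bookkeeping rather than substance: Okamoto's transformation formulas appear in several inequivalent normalizations in the literature, and care is needed to ensure that the convention matches the Hamiltonian \eqref{HIII} used here. In particular, the scaling change $s\mapsto s/4$ invoked in Proposition~1 to relate the auxiliary quantities $(\mu,C,\xi)$ to the canonical pair must \emph{not} be applied in the present calculation, which is precisely the content of the concluding remark in the proposition statement. Once this normalization is pinned down, the derivation is an elementary algebraic manipulation.
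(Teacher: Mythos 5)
Your proposal is correct and matches the paper's approach exactly: the paper offers no proof beyond the remark that $T_1$ corresponds to $a\mapsto a+1$ under $(v_1,v_2)=(a+2,a-2)$ and that the formulas are read off from \cite[Eqs.~(4.40)--(4.43)]{FW_2002a}, which is precisely your first two steps. Your third step (verifying the shifted pair against the Hamilton system \eqref{EoMIII:a}--\eqref{EoMIII:b}) and your attention to the $s\mapsto s/4$ normalization are sensible additions but not a different route.
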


\subsection{Isomonodromic system}\label{SS:HE_isomonodromy}
We now turn our attention to the isomonodromic system \eqref{HE_S:a} -\eqref{HE_D:b} for $u,v$ associated with the Painlev\'e system. 
Following the development of \cite{FW_2007} we define the matrix variable
\begin{equation}
   \Psi(z;s) = \begin{pmatrix} u(z;s) \\ v(z;s) \end{pmatrix} .
\label{HE_Psi}
\end{equation}

To begin with our interest is in the recurrence relations that are satisfied by $u$ and $v$ upon the 
mapping $a \mapsto a+1$.
\begin{proposition}\label{recurLax}
The isomonodromic components $ u, v $ satisfy linear coupled recurrence relations in $ a $
\begin{align}
 u[a+1] & = \frac{s}{s-z}\left(u[a]-\frac{C[a]+a}{\xi[a]}v[a]\right) ,
\label{HE_uST}
\\
 v[a+1] & = -\frac{s}{s-z}\frac{1}{4}\frac{C[a]+a}{\xi[a]}\left(z u[a]-s\frac{C[a]+a}{\xi[a]}v[a]\right) .
\label{HE_vST}
\end{align}
The initial conditions are given by \eqref{uv_a=0} for the sequence $ a \in \Z_{\geq 0} $.
\end{proposition}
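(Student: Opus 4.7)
The plan is to realise the recurrence as a Schlesinger transformation, i.e.\ to construct a rational matrix $R(z;s)$ such that $\Psi[a+1](z;s) = R(z;s)\,\Psi[a](z;s)$ intertwines the Lax pair at parameter $a$ with the one at parameter $a+1$. Comparing with \eqref{HE_uST}--\eqref{HE_vST}, and setting $\alpha := (C+a)/\xi$, one sees that the claim is equivalent to
\begin{equation*}
R(z;s) = \frac{s}{s-z}\begin{pmatrix} 1 & -\alpha \\ -z\alpha/4 & s\alpha^2/4 \end{pmatrix},
\end{equation*}
so the proposition reduces to verifying that this specific $R$ effects the intertwining.

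The form of $R$ I would motivate by the singularity structure of the $z$-part of the Lax pair: regular singular points at $z=0,s$ and an irregular singularity at $z=\infty$. Under the identification $v_1 = a+2$, $v_2 = a-2$ above \eqref{HE_sigma-nu}, the Okamoto transformation $T_1$ of \eqref{T} shifts the formal exponents at $z=0$ and at $z=\infty$ by $\pm 1$ while leaving those at $z=s$ untouched; standard Schlesinger theory then forces $R$ to be linear in $z$ divided by $(s-z)$, with the overall scalar factor pinned down by the normalisation of $\Psi$ at one of the singularities.

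To carry out the verification, write the $z$-part of the Lax pair as $z(s-z)\partial_z \Psi[a] = M[a](z;s)\Psi[a]$; the requirement $\Psi[a+1] = R\Psi[a]$ is then equivalent to
\begin{equation*}
z(s-z)\,\partial_z R = M[a+1]\,R - R\,M[a].
\end{equation*}
Both sides are polynomial in $z$, so equating coefficients yields linear constraints on the entries of $R$ involving $(C,\mu,\xi)$ at $a$ and at $a+1$. The shifted data $(C[a+1],\mu[a+1],\xi[a+1])$ are furnished by the Okamoto recurrences of Proposition \ref{recurHam}, translated via the identifications \eqref{III_mu}--\eqref{III_xi}. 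Solving the coefficient equations, and repeatedly using the constraint \eqref{HE_xidefn}, recovers exactly the $R$ displayed above, hence \eqref{HE_uST}--\eqref{HE_vST}. Compatibility with the $s$-part \eqref{HE_D:a}--\eqref{HE_D:b} then comes for free from the zero-curvature structure of the Lax pair, and agreement with the boundary data of \cite{FW_2007} is settled by checking the initial condition \eqref{uv_a=0} at $a=0$.

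The principal obstacle is the algebra of the intertwining step: one must carefully track which of $C,\mu,\xi$ carry the shift, and repeatedly invoke both $\xi(\mu+s) = -sC(C+a)$ and the recurrences of Proposition \ref{recurHam} to convert shifted quantities back into those at $a$. A secondary but non-trivial point is the overall normalisation: the scalar prefactor $s/(s-z)$ in $R$ is not pinned down by the intertwining relation alone, and must instead be fixed by imposing the correct asymptotic normalisation of $\Psi[a+1]$ at one of the regular singularities, or equivalently by matching the initial data \eqref{uv_a=0}.
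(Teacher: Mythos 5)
Your proposal is a legitimate strategy, but it is a genuinely different route from the one taken in the paper. The paper does not work at the level of the hard-edge Lax pair at all: it goes back to the finite-$N$ Laguerre unitary ensemble, where $a\mapsto a+1$ is a Christoffel--Uvarov transformation of the weight $w(x)\mapsto (x+t)w(x)$, writes down Uvarov's formula for the transformed orthogonal polynomials, converts it to the variables $Q_N,R_N$ of \cite{FW_2007}, uses the three-term identity to sidestep a severe cancellation in $\hat R_N$, and only then takes the hard-edge scaling limit $t\to s/4N$, $x\to -z/4N$ together with the boundary identity $v(s;s)/u(s;s)=\xi/(C+a)$. That derivation \emph{produces} the recurrence and automatically lands on the correct solution and normalisation; your plan instead \emph{verifies} a guessed Schlesinger multiplier $R(z;s)$ against the intertwining relation $z(s-z)\partial_zR=M[a+1]R-RM[a]$, importing the shifted data $C[a+1],\mu[a+1],\xi[a+1]$ from Proposition~\ref{recurHam}. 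That is a finite algebraic check and, combined with the exponent analysis at $z=0$ and the initial data \eqref{uv_a=0}, it does close the argument; you are right that the $s$-dependent scalar is not seen by the $z$-equation and must be fixed separately (the deformation equation \eqref{HE_D:a}--\eqref{HE_D:b} pins it up to a constant, the local data at $z=0$ or $a=0$ fixes the constant, and one must also rule out an admixture of the second, exponent-$(-1)$ solution at $z=0$, which the regularity and invertibility of your $R$ at $z=0$ handles).

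Two corrections to the heuristic part. First, your account of which exponents shift is backwards: from \eqref{HE_LP:a} the residue at $z=0$ is $\bigl(\begin{smallmatrix}0&-\mu/s\\ 0&-2\end{smallmatrix}\bigr)$ with $a$-independent eigenvalues $\{0,-2\}$, whereas the residue at $z=s$ has trace $-a$ and determinant $-C(C+a)-\xi(\mu+s)/s=0$ by \eqref{HE_xidefn}, hence eigenvalues $\{0,-a\}$; it is the exponents at $z=s$ that shift under $a\mapsto a+1$, consistent with $\det R$ having its only pole at $z=s$. Second, the two sides of the intertwining relation are rational, not polynomial, in $z$; one must first clear the $(s-z)$ denominators (writing $R=\tfrac{s}{s-z}R_0$ with $R_0$ linear in $z$ gives $zR_0+z(s-z)R_0'=M[a+1]R_0-R_0M[a]$) before equating coefficients. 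Neither point breaks the proof, but as written the motivation would mislead a reader about the singularity structure.
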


\begin{proof}
The result \eqref{eq:1.16} from \cite{FW_2007} was derived as the hard edge scaling limit of the joint distribution of the first
and second eigenvalues in the finite $ N $ Laguerre unitary ensemble.  In the latter, the transformation $ a \mapsto a+1 $ 
implies a Christoffel-Uvarov transformation of the weight $ w(x) \mapsto (x+t)w(x) $. 
From the work of Uvarov \cite{Uv_1969} we deduce that the orthogonal polynomials $ p_N(x;t;a) $ 
(we adopt the conventions and notations of Section 2 
in \cite{FW_2007}, which should not be confused with their subsequent use in Section \ref{S:H2S}) transform
\begin{equation}
  \hat{p}_N := p_N(x;t;a+1) = \frac{A^{(1,0)}_N}{x+t}\left[ p_{N+1}(x;t;a)p_{N}(-t;t;a)-p_{N}(x;t;a)p_{N+1}(-t;t;a) \right] ,
\end{equation}
where $ A^{(1,0)}_N $ is a normalisation. In the notations of \cite{FW_2007} the three term recurrence coefficients 
transform as
\begin{align}
  \hat{a}^2_{N} & = a^2_{N}\frac{\gamma_{N}^2}{\gamma_{N+1}\gamma_{N-1}}\frac{p_{N+1}(-t;t;a)p_{N-1}(-t;t;a)}{p_{N}(-t;t;a)^2} ,
\\
  \hat{b}_{N} & = b_{N}+a_{N}\frac{p_{N-1}(-t;t;a)}{p_{N}(-t;t;a)}-a_{N+1}\frac{p_{N}(-t;t;a)}{p_{N+1}(-t;t;a)} .
\end{align}
Employing the variables $ Q_N, R_N $ (see the definitions Eqs. (3.41) and (3.52) of \cite{FW_2007}) 
instead of $ p_N, p_{N-1} $ we find that the transformation gives
\begin{align}
  \hat{Q}_N(x) & = \frac{t}{x+t}\left[ Q_N(x)-\frac{Q_N(-t)}{R_{N+1}(-t)}R_{N+1}(x) \right] ,
\\
  \hat{R}_N(x) & = \frac{t}{x+t}Q_N(-t)\left[ \frac{R_{N}(x)}{R_{N}(-t)}-\frac{R_{N+1}(x)}{R_{N+1}(-t)} \right] .
\end{align}
However the second of these equations will suffer a severe cancellation under the hard edge scaling limit
$ t \to s/4N, x \to -z/4N $ as $ N \to \infty $
so we need to be able to handle the subtle cancellations occurring. For this we employ a restatement of the
identity Eq. (3.42) of \cite{FW_2007}
\begin{equation}
  x \theta_N Q_N+(\kappa_N-t)R_N-(\kappa_{N+1}+t)R_{N+1} = 0 ,
\end{equation}  
which gives us an exact relation between $ R_N $ and $ R_{N+1} $. 
We now compute
\begin{equation}
 \hat{R}_N(x) = -\frac{t}{x+t}\frac{\theta_NQ_N(-t)}{R_N(-t)}\frac{tQ_N(-t)R_N(x)+xR_N(-t)Q_N(x)}{(\kappa_N-t)R_N(-t)-t\theta_NQ_N(-t)} .
\end{equation}
We are now in a position to take the hard edge scaling limits, as given in \cite{FW_2007} by
Eq. (5.12) for $ \kappa_N $, Eq. (5.10) for $ \theta_N $, Eq. (5.28) for $ Q_N $ and Eq. (5.29) for $ R_N $.
In addition we employ the identity, \cite[Eq. (5.45)]{FW_2007}
\begin{equation}        
    \frac{v(s;s)}{u(s;s)} = \frac{\xi}{C+a} = -\frac{sC}{\mu+s} .                               
\end{equation}
The final result is (\ref{HE_uST},\ref{HE_vST}) where all dependencies other those other than $ a $
are suppressed.
\end{proof}

\subsection{Special Case $ a\in \Z $}\label{SS:integer_a}
In Subsection 5.2 of \cite{FW_2007} determinantal evaluations of the Painlev\'e variables $ \nu$, $\mu$, $C$ and $\xi $, of the 
isomonodromic components $ u,v $ and $ A_a $ for $ a \in \Z_{\geq 0} $ were given. These were of 
Toeplitz or bordered Toeplitz form and of sizes $ a\times a, (a+1)\times(a+1) $ and $ (a+2)\times(a+2) $
respectively. Here we content ourselves with displaying the first two cases only, which can serve as
initial conditions for the recurrences in Propositions \ref{recurHam} and \ref{recurLax}. In order to signify 
the $ a $-value we append a subscript to the variables. In all that follows $ I_{\sigma}(z) $ refers to the
standard modified Bessel function with index $ \sigma $ and argument $ z $, see \textsection 10.25 of \cite{DLMF}.

\subsubsection{$ a = 0 $}
Some details of the first case $ a=0 $ were given in Propositions 5.9, 5.10 and 5.11 of \cite{FW_2007} and 
we augment that collection by computing the remaining variables. Thus we find for the primary variables
\begin{equation}
 \nu_0(s) = 0, \quad \mu_0(s) = -s, \quad C_0(s) = 0, \quad \xi_0(s) = 0 ,
\end{equation}
for the canonical Hamiltonian variables
\begin{equation}
 p_0(s/4) = 0, \quad
 q_0(s/4) = \frac{\ssqrt{s}}{2}\frac{I_{3}(\ssqrt{s})}{I_{2}(\ssqrt{s})} ,
\label{H_a=0}
\end{equation}
the isomonodromic components
\begin{equation}
  u_0(z;s) = \frac{8}{z}I_{2}(\ssqrt{z}), \quad
  v_0(z;s) = \frac{4}{\ssqrt{z}} I_{3}(\ssqrt{z}) ,
\label{uv_a=0}
\end{equation}
and the distribution of the spacing
\begin{equation}
  A_0(z)=\frac{1}{4}e^{-z/4}\left[I_{2}(\ssqrt{z})^2-I_{1}(\ssqrt{z})I_{3}(\ssqrt{z})\right] .
\end{equation} 
This formula is essentially the same as the gap probability at the hard edge for $ a=2 $, as one can see 
from the $ \mu=0 $ specialisation of Eq. (8.97) in \cite{rmt_Fo}. 
Interestingly we should point out that the moments of the above distribution can be exactly evaluated
and we illustrate this observation by giving the first few examples ($ m_0=1 $)
\begin{align*}
 m_1 & = 4e^2 \left[ I_0(2)-I_1(2) \right] ,
\\
 m_2 & = 32e^2 I_0(2) ,
\\
 m_3 & = 384e^2 \left[ 2I_0(2)+I_1(2) \right] ,
\\
 m_4 & = 2048e^2 \left[ 13I_0(2)+9I_1(2) \right] ,
\\
 m_5 & = 20480e^2 \left[ 55I_0(2)+42I_1(2) \right] ,
\\
 m_6 & = 98304e^2 \left[ 557I_0(2)+441I_1(2) \right] .
\end{align*}

\subsubsection{$ a = 1 $}
This case was not considered in \cite{FW_2007}. We have computed these from the results for the finite rank
deformed Laguerre ensemble, as given in Section~4 of \cite{FW_2007}, and then applied the hard edge scaling
limits given by the Hilb type asymptotic formula Eq. (5.2) therein and the limits of Proposition 5.1 and Corollary 5.2 
of \cite{FW_2007}. For the primary variables we find
\begin{align}
  \nu _1(s)& = \frac{\ssqrt{s}}{2}\frac{I_{3}(\ssqrt{s})}{I_{2}(\ssqrt{s})},
\\
  \mu _1(s)& = -4\ssqrt{s}\frac{I_{3}(\ssqrt{s})}{I_{2}(\ssqrt{s})}-s\frac{I_{3}(\ssqrt{s})^2}{I_{2}(\ssqrt{s})^2},
\\
  C_1(s)& = -3+\frac{\ssqrt{s} I_{2}(\ssqrt{s})}{2 I_{3}(\ssqrt{s})}-\frac{\ssqrt{s} I_{3}(\ssqrt{s})}{2 I_{2}(\ssqrt{s})},
\\
  \xi _1(s)& = \frac{s}{4}-\frac{s I_{2}(\ssqrt{s})^2}{4 I_{3}(\ssqrt{s})^2}+\frac{3 \ssqrt{s} I_{2}(\ssqrt{s})}{2 I_{3}(\ssqrt{s})} ,
\end{align}
the PIII$^{\prime}$ canonical Hamiltonian variables 
\begin{align}
 p_1(s/4) & = 1-\frac{I_{3}(\ssqrt{s})I_{1}(\ssqrt{s})}{I_{2}(\ssqrt{s})^2},
\\
 q_1(s/4) & = \frac{I_{2}(\ssqrt{s})}{2I_{3}(\ssqrt{s})}
        \frac{\ssqrt{s} I_{2}(\ssqrt{s})^2-6 I_{2}(\ssqrt{s}) I_{3}(\ssqrt{s})-\ssqrt{s} I_{3}(\ssqrt{s})^2}
             {I_{1}(\ssqrt{s}) I_{3}(\ssqrt{s})-I_{2}(\ssqrt{s})^2},
\end{align}
the isomonodromic components for generic argument $ s>z>0 $ 
\begin{align}
  u_1(z;s)& = \frac{8\ssqrt{s}}{z I_{3}(\ssqrt{s})}\frac{\ssqrt{s}I_{1}(\ssqrt{s}) I_{2}(\ssqrt{z})- \ssqrt{z}I_{1}(\ssqrt{z}) I_{2}(\ssqrt{s})}{s-z} ,
\\
  v_1(z;s)& = \frac{4\ssqrt{s}I_{2}(\ssqrt{s})}{\ssqrt{z}I_{3}(\ssqrt{s})^2}\frac{\ssqrt{s}I_{2}(\ssqrt{s}) I_{3}(\ssqrt{z})- \ssqrt{z}I_{2}(\ssqrt{z}) I_{3}(\ssqrt{s})}{s-z} ,
\end{align}
and the isomonodromic components on $ s=z $
\begin{align}
  u_1(s;s)& = -\frac{4 I_{1}(\ssqrt{s})}{\ssqrt{s}}+\frac{4 I_{2}(\ssqrt{s})^2}{\ssqrt{s} I_{3}(\ssqrt{s})},
\\
  v_1(s;s)& = -2 I_{2}(\ssqrt{s}) \frac{-s I_{1}(\ssqrt{s})^2+2 \ssqrt{s} I_{1}(\ssqrt{s}) I_{2}(\ssqrt{s})+(8+s) I_{2}(\ssqrt{s})^2}
                                      {\left[\ssqrt{s} I_{1}(\ssqrt{s})-4 I_{2}(\ssqrt{s})\right]^2} ,
\end{align}
and the distribution of the eigenvalue gap is
\begin{align}
  A_1(z) & = 2^{-4}\left[ I_{0}(\ssqrt{z})I_{2}(\ssqrt{z})-I_{1}(\ssqrt{z})^2 \right] \int^{\infty}_{z} {\rm d}s\; e^{-s/4}I_{2}(\ssqrt{s}) \notag 
\\ 
& \quad  + 2^{-3}z^{-1/2}I_{2}(\ssqrt{z}) \int^{\infty}_{z} {\rm d} s\; \ssqrt{s}e^{-s/4}\frac{\left[ \ssqrt{s}I_{1}(\ssqrt{z})I_{2}(\ssqrt{s})-\ssqrt{z}I_{1}(\ssqrt{s})I_{2}(\ssqrt{z}) \right]}{s-z} .
\end{align}
From the point of view of checking one can verify that the above solutions satisfy their respective 
characterising equations.

\subsection{Lax Pairs}\label{SS:HE_LaxPairs}

We now examine the isomonodromic system from the viewpoint of its characterisation as the solution to the 
partial differential systems with respect to $ z $ and $ s $.  
\begin{proposition}[{\cite[Eqs. (5.51,5.52,5.54-7)]{FW_2007}}]
The matrix form of the spectral derivatives \eqref{HE_S:a} and \eqref{HE_S:b} and deformation 
derivatives \eqref{HE_D:a} and \eqref{HE_D:b} yield the Lax pair
\begin{equation}
  \partial_z \Psi = 
  \left\{  \begin{pmatrix} 0 & 0 \\ \frac{1}{4} & 0 \end{pmatrix} 
          +\begin{pmatrix} C & \frac{\displaystyle\mu+s}{\displaystyle s} \\ \xi & -C-a \end{pmatrix}\frac{1}{z-s}
          +\begin{pmatrix} 0 & -\frac{\displaystyle\mu}{\displaystyle s} \\ 0 & -2 \end{pmatrix}\frac{1}{z}
  \right\} \Psi ,
\label{HE_LP:a}
\end{equation}
and
\begin{equation}
  \partial_s \Psi = 
  \left\{  \frac{1}{s}\begin{pmatrix} -C & 0 \\ -\xi & -C \end{pmatrix}
          -\begin{pmatrix} C & \frac{\displaystyle\mu+s}{\displaystyle s} \\ \xi & -C-a \end{pmatrix}\frac{1}{z-s}
  \right\} \Psi .
\label{HE_LP:b}
\end{equation}
This system is essentially equivalent to the isomonodromic system
of the fifth Painlev\'e equation but is the degenerate case.
The system has two regular singularities at $ z=0,s $ and an irregular one at $ z=\infty $
with a Poincar\'e index of $ \tfrac{1}{2} $.
\end{proposition}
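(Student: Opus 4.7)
The plan is to show that the matrix system \eqref{HE_LP:a}--\eqref{HE_LP:b} is nothing but a partial-fraction rewriting of the scalar derivative equations \eqref{HE_S:a}--\eqref{HE_D:b}, supplemented with a short analysis of the singular structure of the coefficient matrices. Since $\Psi=(u,v)^t$, the assertion will follow componentwise, so the work reduces to four scalar identities that are each routine bookkeeping.

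First I would treat the spectral equation \eqref{HE_LP:a}. Dividing \eqref{HE_S:a}--\eqref{HE_S:b} by $z(s-z)$ exhibits rational functions of $z$ with poles only at $z=0$ and $z=s$, plus (for the equation for $v$) a polynomial part coming from the $(z-s)/4$ term. I would decompose each coefficient as
\[
\frac{\alpha z+\beta}{z(s-z)} = \frac{\beta/s}{z} - \frac{\alpha s+\beta}{s(s-z)},
\]
and collect the result into three additive matrix blocks according to whether the summand is constant in $z$, behaves like $1/(z-s)$, or behaves like $1/z$. For the $u$-equation the polynomial part vanishes and the pole data read off as the top rows of the matrices $B$ (at $z=0$) and $A$ (at $z=s$) in \eqref{HE_LP:a}; for the $v$-equation the part $-z(z-s)/4\cdot u/[z(s-z)] = u/4$ produces the lower-left entry $1/4$ of the constant matrix, while the residues at $z=0$ and $z=s$ produce $-2$ and $-C-a$ in the lower-right entries, and $\xi$ in the lower-left entry at $z=s$. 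These are precisely the entries in \eqref{HE_LP:a}.

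For the deformation equation \eqref{HE_LP:b} I would proceed in the same spirit, dividing \eqref{HE_D:a}--\eqref{HE_D:b} by $s(s-z)$. The key algebraic identity here is the partial-fraction split
\[
\frac{z}{s(s-z)} = -\frac{1}{s} + \frac{1}{s-z},
\]
which, applied to $zC$, $z\xi$, and $-[s(2C+a)-zC]$, separates the rational coefficients into a $1/s$-piece and a $1/(z-s)$-piece. Comparing with \eqref{HE_LP:b} shows the former piece is the diagonal-plus-$\xi$ residue matrix at $s=0$ and the latter is precisely the negative of the matrix $A$ of \eqref{HE_LP:a} placed over $z-s$; this consistency (the same residue matrix governing both the $z$- and $s$-flow at $z=s$) is the standard compatibility of an isomonodromic pair.

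The remaining assertion --- that the system is the degenerate Painlev\'e~V isomonodromy pair with regular singularities at $z=0,s$ and an irregular singularity of Poincar\'e index $\tfrac{1}{2}$ at $z=\infty$ --- is a structural observation. The regular nature at $z=0,s$ is immediate since the matrix coefficient has only simple poles there. At $z=\infty$, the leading matrix is the constant nilpotent $\left(\begin{smallmatrix} 0 & 0 \\ 1/4 & 0 \end{smallmatrix}\right)$; I would diagonalise this after a shearing transformation $\Psi\mapsto\operatorname{diag}(1,z^{1/2})\Psi$, which turns the leading term into $\tfrac12 z^{-1/2}\left(\begin{smallmatrix} 0 & 1 \\ 1 & 0 \end{smallmatrix}\right)$ plus lower-order terms. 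The presence of the half-integer power in the formal exponent is exactly the statement that the Poincar\'e index at infinity equals $\tfrac{1}{2}$, which is the feature distinguishing the degenerate Painlev\'e~V case from the generic one. The main technical obstacle is not the verification of the matrix entries --- that is mechanical --- but organising the asymptotic analysis at infinity so that the Poincar\'e index $\tfrac{1}{2}$ drops out cleanly from the nilpotent structure of the leading matrix; this is the one place where a careful gauge choice, rather than routine algebra, is required.
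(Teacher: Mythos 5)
Your verification is correct, and it is worth noting that the paper itself offers no proof of this proposition at all --- it is stated as a direct quotation of Eqs.~(5.51, 5.52, 5.54--7) of \cite{FW_2007}. Your partial-fraction argument therefore supplies the derivation the paper omits, and I have checked the residues: dividing \eqref{HE_S:a}--\eqref{HE_S:b} by $z(s-z)$ and \eqref{HE_D:a}--\eqref{HE_D:b} by $s(s-z)$, the splits $\tfrac{-(\mu+z)}{z(s-z)}=\tfrac{-\mu/s}{z}+\tfrac{(\mu+s)/s}{z-s}$, $\tfrac{-2s+(C+a+2)z}{z(s-z)}=\tfrac{-2}{z}+\tfrac{-C-a}{z-s}$, $\tfrac{z}{s(s-z)}=-\tfrac1s+\tfrac{1}{s-z}$, and the constant $\tfrac14$ arising from $-\tfrac{z(z-s)}{4}\cdot\tfrac{1}{z(s-z)}=\tfrac14$ reproduce every entry of \eqref{HE_LP:a} and \eqref{HE_LP:b}, including the minus sign that makes the $1/(z-s)$ residue of the $s$-flow the negative of that of the $z$-flow. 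Your analysis at infinity is also sound in substance: after the shear $\mathrm{diag}(1,z^{1/2})$ the leading term is $z^{-1/2}\left(\begin{smallmatrix}0&1\\ 1/4&0\end{smallmatrix}\right)$ (not $\tfrac12 z^{-1/2}\left(\begin{smallmatrix}0&1\\ 1&0\end{smallmatrix}\right)$ --- the off-diagonal $1$ comes from the cancellation $(\mu+s)/s-\mu/s=1$ in the $O(1/z)$ term), but both matrices have eigenvalues $\pm\tfrac12$, so the formal exponents $\pm z^{1/2}$ and hence the Poincar\'e rank $\tfrac12$ follow exactly as you say. This half-integer rank, forced by the nilpotency of the leading matrix, is precisely what distinguishes the degenerate from the generic Painlev\'e~V isomonodromy problem, so your proof covers the structural claims of the proposition as well as the matrix identities.
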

 
The form of the isomonodromic system \eqref{HE_S:b} -\eqref{HE_xidefn} is not suitable for computing the
hard-to-soft edge scaling limit, so we need to perform some preliminary transformations on it.

\begin{proposition}   
Under the gauge transformation
\begin{equation}
  u, v \mapsto z^{-1}s^{a/2}(s-z)^{-a/2} u, v
\label{gaugeFxm}
\end{equation}
the spectral derivatives \eqref{HE_S:a} and\eqref{HE_S:b} become
\begin{align}
  z(s-z)\partial_{z}u & = \left[s-z-z(C+\tfrac{1}{2}a)\right] u-(\mu+z) v,
\label{HE_S:c}\\
  z(s-z)\partial_{z}v & = -z\left[\xi +\tfrac{1}{4}(z-s)\right] u+\left[z-s+(C+\tfrac{1}{2}a)z\right] v,
\label{HE_S:d}
\end{align}
whilst the deformation derivatives \eqref{HE_D:a} and \eqref{HE_D:b} become
\begin{align}
  (s-z)s\partial_su & =  (C+\tfrac{1}{2}a)z u+(\mu+s) v ,
\label{HE_D:c} \\
  (s-z)s\partial_sv & = z\xi u+(C+\tfrac{1}{2}a)(z-2s)v .
\label{HE_D:d}
\end{align}
Furthermore let us scale the spectral variable $ z \to sr $.
Consequently Equations~\eqref{HE_S:c} and \eqref{HE_S:d} become
\begin{equation}
  \partial_r \Psi = 
  \left\{  \begin{pmatrix} 0 & 0 \\ \frac{1}{4}s & 0 \end{pmatrix} 
          +\begin{pmatrix} C+a/2 & \frac{\displaystyle\mu+s}{\displaystyle s} \\ \xi & -C-a/2 \end{pmatrix}\frac{1}{r-1}
          +\begin{pmatrix} 1 & -\frac{\displaystyle\mu}{\displaystyle s} \\ 0 & -1 \end{pmatrix}\frac{1}{r}
  \right\} \Psi ,
\label{HE_LP:c}
\end{equation}
and Equations~\eqref{HE_D:c} and \eqref{HE_D:d} become
\begin{equation}
  s\partial_s \Psi = 
  \left\{  \begin{pmatrix} -C-a/2 & 0 \\ -\xi & -C-a/2 \end{pmatrix}
          -\begin{pmatrix} C+a/2 & \frac{\displaystyle\mu+s}{\displaystyle s} \\ \xi & -C-a/2 \end{pmatrix}\frac{1}{r-1}
  \right\} \Psi .
\label{HE_LP:d}
\end{equation}
This system has two regular singularities $ r=0,1 $ and an irregular one at $ r=\infty $ with Poincar\'e 
rank of $ \tfrac{1}{2} $ (due to the nilpotent character of the leading matrix in \ref{HE_LP:c}), 
and is denoted by the symbol $ (1)^2(\tfrac{3}{2}) $.
\end{proposition}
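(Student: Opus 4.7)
My plan is to verify each of the three assertions by direct calculation; no tool beyond the product rule, the chain rule, and the standard formal reduction theory at irregular singular points is required. The main work is careful bookkeeping in the rescaling step.

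\emph{Gauge and rescaling.} Set $g(z,s) = z s^{-a/2}(s-z)^{a/2}$, so that the new dependent variables in the proposition are $u' = gu$ and $v' = gv$. A direct log-derivative calculation gives
\begin{equation*}
z(s-z)\frac{\partial_z g}{g} = (s-z) - \tfrac{a}{2}z, \qquad (s-z)s\,\frac{\partial_s g}{g} = \tfrac{a}{2}z.
\end{equation*}
Multiplying \eqref{HE_S:a}--\eqref{HE_S:b} through by $g$ and using $g\,\partial_z u = \partial_z u' - u'(\partial_z g)/g$, the $u'$-coefficient collects as $-Cz + (s-z) - \tfrac{a}{2}z = s - z - (C+\tfrac{a}{2})z$, which is exactly \eqref{HE_S:c}; a parallel check on the $v'$-coefficients produces \eqref{HE_S:d}. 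The same recipe applied to \eqref{HE_D:a}--\eqref{HE_D:b} with the second identity yields \eqref{HE_D:c}--\eqref{HE_D:d}, the net effect being the diagonal shift $C \mapsto C + a/2$. The substitution $z = sr$, with $\partial_z = s^{-1}\partial_r$ at fixed $s$ and $z(s-z) = s^2 r(1-r)$, followed by division by $s\,r(r-1)$ and partial-fraction expansion, then reproduces the three pieces of \eqref{HE_LP:c}: the $1/r$ and $1/(r-1)$ residues match on inspection, and the off-diagonal constant entry $s/4$ in position $(2,1)$ arises because $-z(z-s)/4 = sr\cdot s(r-1)/4$ cancels one $(r-1)$ factor in the denominator. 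For the deformation direction one simply substitutes $z = sr$ into \eqref{HE_D:c}--\eqref{HE_D:d} and divides by $s(s-z) = -s^2(r-1)$, yielding \eqref{HE_LP:d}; the derivative $s\partial_s$ there is understood at fixed $z$, so no chain-rule re-routing enters.

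\emph{Singularity structure.} From \eqref{HE_LP:c}, the two finite simple poles at $r = 0,1$ have finite residue matrices and so are regular singular. At $r = \infty$ substitute $\zeta = 1/r$, giving $\partial_\zeta \Psi = -\zeta^{-2}(M + O(\zeta))\Psi$ with $M = \bigl(\begin{smallmatrix} 0 & 0 \\ s/4 & 0 \end{smallmatrix}\bigr)$. Formally this is Poincaré rank one, but $M$ is nilpotent of index two and its two eigenvalues coincide; a ramification $\zeta = \eta^2$ (standard Turrittin reduction) then produces a reduced leading matrix with distinct eigenvalues at half-integer rank, so the true Poincaré rank at infinity equals $\tfrac{1}{2}$. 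Summing the Katz invariants $1+1+\tfrac{3}{2}$ over the three singular points recovers the classification symbol $(1)^2(\tfrac{3}{2})$.

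The only real obstacle is the partial-fraction bookkeeping in the rescaling step: one must attribute the inhomogeneous $(s-z)$ and $-z(z-s)/4$ terms in \eqref{HE_S:c}--\eqref{HE_S:d} correctly to the $1/r$-residue matrix and to the off-diagonal constant matrix, respectively. Once this is done the entire verification is mechanical.
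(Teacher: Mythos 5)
Your verification is correct and is exactly the direct computation that the paper leaves implicit (no proof is given there): the logarithmic-derivative identities for the gauge factor, the partial-fraction split after $z=sr$, and the nilpotent-leading-matrix/ramification argument for the rank-$\tfrac12$ irregular point at $r=\infty$ all check out, and you rightly observe that \eqref{HE_LP:d} is obtained by pure substitution with $\partial_s$ still taken at fixed $z$. The only blemish is a sign typo in your parenthetical $-z(z-s)/4 = sr\cdot s(r-1)/4$ (the right-hand side should be $-s^2r(r-1)/4$), which does not affect the conclusion that the $(2,1)$ constant entry is $+\tfrac{1}{4}s$.
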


The precise solutions we seek are defined by their local expansions about $ r=0,1 $, and in particular 
the former case. From the general theory of linear ordinary differential equations \cite{Sibuya_1990}, 
\cite{CL_1955} we can deduce the existence of convergent expansions about $ r=0 $ ($ s\neq 0 $)
\begin{align}
 u(r;s) & = \sum_{m=0}^{\infty}\text{u}_{m}(s)r^{\chi_0+m} ,
\label{uExpr=0}\\
 v(r;s) & = \sum_{m=0}^{\infty}\text{v}_{m}(s)r^{\chi_0+m} ,
\label{vExpr=0}
\end{align}
with a radius of convergence of at most unity. The indicial values $ \chi_0 $ are fixed by
\begin{equation}
 \mu\text{v}_0+s\text{u}_0(\chi_0-1) = 0,\qquad \text{v}_0(\chi_0+1) = 0 ,
\end{equation}
and the appropriate solution has $ \text{v}_0=0, \text{u}_0 \neq 0 $ and $ \chi_0=1 $
(actually from \cite[Eqs. (5.38, 5.39)]{FW_2007} we know $ \text{u}_0=s, \text{v}_0=0 $).
The general coefficients are given by the recurrence relations
\begin{multline}
  4m(m+2)s\text{u}_{m} = -[-4(m^2+m-2)s+2(m+2)s(2C+a)+\mu(s-4\xi)]\text{u}_{m-1}
\\ 
     +s\mu\text{u}_{m-2}-2[ 2(m+2)s+(2C+a)\mu+2(m+1)\mu ]\text{v}_{m-1} ,
\label{uCff}
\end{multline}
\begin{equation}
  4(m+2)\text{v}_{m} = (s-4\xi)\text{u}_{m-1}-s\text{u}_{m-2}+2[ 2C+a+2(m+1) ]\text{v}_{m-1} .
\label{vCff}
\end{equation}
The first few terms are given by
\begin{gather}
 \text{u}_{0} = s, \quad \text{v}_{0} = 0, 
\label{uvCff:0}\\
 \text{u}_{1} = -\tfrac{1}{2}(2C+a)s+\tfrac{1}{3}\mu(\xi-\tfrac{1}{4}s), \quad \text{v}_{1} = -\tfrac{1}{3}s(\xi-\tfrac{1}{4}s) .
\label{uvCff:1}
\end{gather}
Similar considerations apply to the local expansions about $ r=1 $ however in the hard-to-soft
edge limit this singularity will diverge to $ \infty $ and we will not be able to draw any simple
conclusions in this case.

\section{Hard to Soft Edge Scaling}\label{S:H2S}
\setcounter{equation}{0}

The hard edge to soft edge scaling limit \cite{BF_2003} will be interpreted as the degeneration of
PIII' to PII. Therefore we begin with a summary of the relevant Okamoto theory for PII.

\subsection{Okamoto PII theory}
Henceforth the canonical variables of the Hamiltonian system for PII will be denoted by $ \{q,p;t,H \} $ 
and should not be confused with the use of the same symbols for PIII'. Conforming to common usage we have 
the parameter relations $ \alpha = \alpha_1 - \half = \half - \alpha_0 $. The PII Hamiltonian is
\begin{equation}
  H = -\half (2q^2 - p + t)p - \alpha_1 q \ ,
\label{Ham-PII}
\end{equation}
and therefore the PII Hamilton equations of motion ($ \dot{\phantom{q}} \equiv d/dt $) are
\begin{equation}
  \dot{q} = p - q^2 - \half t\ , \qquad
  \dot{p} = 2qp + \alpha_1 \ .
\label{PII-Heqn}
\end{equation}
The transcendent $ q(t;\alpha) $ then satisfies the standard form of the second Painlev\'e equation
\begin{equation}
 \ddot{q} = 2q^3+tq+\alpha .
\end{equation} 
The PII Hamiltonian $ H(t) $ satisfies the second-order second-degree differential 
equation of Jimbo-Miwa-Okamoto $\sigma$ form for PII,
\begin{equation} 
  \left( \ddot{H} \right)^2 + 4\left( \dot{H} \right)^3 + 2\dot{H}[t\dot{H}-H] - \quarter\alpha^2_1 = 0 \ .
\label{PII-Hode}
\end{equation}
Using the first two derivatives of the non-autonomous Hamiltonian $H$
\begin{equation}
\begin{split}
  \dot{H}  & = -\half p  \ ,\\
  \ddot{H} & = -qp-\half\alpha_1 \ ,
\end{split}
\label{PII-derH}
\end{equation}
we can recover the canonical variables of \eqref{PII-Heqn}.

\subsection{Degeneration from PIII' to PII}
We know from \cite{BF_2003} that upon the scaling \eqref{eq:1.31a} of the variables and taking 
$a \rightarrow \infty$ the hard edge kernel \eqref{eq:1.31} limits to the soft edge kernel \eqref{eq:2fa}
and furthermore the joint distribution $p_{(2)}^{{\rm hard,}\, a}(x_1,x_2)$ limits to 
$p_{(2)}^{{\rm soft}}(x_1,x_2)$. The same holds true for the relationship between $p_{(1)}^{{\rm hard,}\, a}(s)$
and $p_{(1)}^{{\rm soft}}(s)$. This latter fact helps in our computation of $p_{(2)}^{{\rm soft}}(x_1,x_2)$,
since the evaluation of $p_{(1)}^{{\rm soft}}(s)$ in terms of PII is known from previous work \cite{rmt_Fo}, \cite{FW_2001a} allowing
the limiting form of $\nu(t)$ in \eqref{eq:x1} to be deduced. But $\nu(t)$ is the very same PIII' quantity 
appearing in the evaluation  \eqref{eq:1.16} of  $p_{(2)}^{{\rm hard,}\, a}(s-z,s)$.
 
\begin{proposition} \label{prop:8new}
Let
\begin{align}
  s = a^2 [ 1-2^{2/3}a^{-2/3}\tau ].   
\label{H2S_s}
\end{align}
We have that for $a \rightarrow \infty$
\begin{align}
\nu(s) - \frac{s}{4} + a \rightarrow - \left( \frac{a}{2}\right) ^{2/3} \sigma_{II}(\tau),	 
\label{eq:x2} 
\end{align}
where $ t=-2^{1/3}\tau $,
\begin{align}
\sigma_{II}(\tau) = -2^{1/3} \left. H(t) \right|_{\alpha_1 = 2}	 ,
\label{eq:x2a}
\end{align}
and furthermore
\begin{align}
\sigma_{II}(\tau) \mathop{\sim}_{\tau \rightarrow \infty} \frac{{\rm d}}{{\rm d}\tau} \log K^{{\rm soft}}(\tau,\tau).	  
\label{eq:x3} 
\end{align} 
\end{proposition}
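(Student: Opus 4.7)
The plan is to obtain \eqref{eq:x2}-\eqref{eq:x2a} by matching the $\nu$-expression \eqref{eq:x1} for $p_{(1)}^{{\rm hard,}\,a}$ against the known PII characterisation of $p_{(1)}^{\rm soft}$ from \cite{rmt_Fo,FW_2001a}, and then to read off \eqref{eq:x3} from the fact that $p_{(1)}^{\rm soft}(\tau)\sim K^{\rm soft}(\tau,\tau)$ as $\tau\to\infty$.

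First I would invoke the hard-to-soft convergence of the distribution of the first eigenvalue from \cite{BF_2003}: under the scaling \eqref{H2S_s} one has $ds/d\tau=-2^{2/3}a^{4/3}$ and
\begin{equation*}
2^{2/3}a^{4/3}\,p_{(1)}^{{\rm hard,}\,a}(s)\;\longrightarrow\;p_{(1)}^{\rm soft}(\tau).
\end{equation*}
Taking logarithms in \eqref{eq:x1} and differentiating in $s$ yields the clean identity
\begin{equation*}
s\bigl(\log p_{(1)}^{{\rm hard,}\,a}\bigr)'(s)\;=\;\nu(s)-\tfrac{s}{4}+a ,
\end{equation*}
while the convergence above gives $\log p_{(1)}^{{\rm hard,}\,a}(s)=\log p_{(1)}^{\rm soft}(\tau)-\log|ds/d\tau|+o(1)$. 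Differentiating this in $\tau$ (the Jacobian term is $\tau$-independent and drops out), dividing by $ds/d\tau$, and using $s\sim a^2$ together with $s/(ds/d\tau)\sim-(a/2)^{2/3}$ produces
\begin{equation*}
\nu(s)-\tfrac{s}{4}+a\;\longrightarrow\;-\bigl(\tfrac{a}{2}\bigr)^{2/3}\bigl(\log p_{(1)}^{\rm soft}\bigr)'(\tau),
\end{equation*}
which is \eqref{eq:x2} with $\sigma_{II}(\tau):=(\log p_{(1)}^{\rm soft})'(\tau)$. The identification \eqref{eq:x2a} of $\sigma_{II}$ with $-2^{1/3}H|_{\alpha_1=2}$ is then pulled from the PII-Hamiltonian representation of $p_{(1)}^{\rm soft}$ available in \cite{FW_2001a}.

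As an independent check, and in order to pin down the correct PII branch, I would substitute the ansatz $\nu(s)=s/4-a-(a/2)^{2/3}\sigma(\tau)$ directly into the $\sigma_{III'}$-equation \eqref{PIIIsigma} and verify that, after the $O(a^2)$ and $O(a^{4/3})$ contributions cancel identically, the surviving $O(1)$ equation is the Jimbo--Miwa--Okamoto $\sigma$-form \eqref{PII-Hode} with $\alpha_1=2$; the boundary condition \eqref{eq:4.1a} propagated through \eqref{Ham-PII}-\eqref{PII-derH} then selects the appropriate solution. Finally, for \eqref{eq:x3}, as $\tau\to\infty$ the probability of two or more eigenvalues above $\tau$ is exponentially small, so $p_{(1)}^{\rm soft}(\tau)\sim\rho_{(1)}^{\rm soft}(\tau)=K^{\rm soft}(\tau,\tau)$; taking logarithms and differentiating in $\tau$ yields $\sigma_{II}(\tau)\sim\frac{d}{d\tau}\log K^{\rm soft}(\tau,\tau)$.

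The main obstacle is the large-$a$ book-keeping in the consistency check: the terms $s^2(\nu'')^2$, $(a+2)^2(\nu')^2$ and $s(\nu')^2\cdot 4\nu'$ in \eqref{PIIIsigma} are each of size at least $a^2$ under the ansatz, so substantial cancellations at orders $a^2$ and $a^{4/3}$ must be verified exactly before the PII equation can emerge at order unity. The same cancellations, phrased on the density side, are precisely what allow Stirling's expansion of the algebraic prefactor in \eqref{eq:x1} to reproduce the normalisation of $p_{(1)}^{\rm soft}$ and hence produce $o(1)$ rather than a divergent remainder in the matching step.
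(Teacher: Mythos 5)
Your proposal is correct and follows essentially the same route as the paper: both match the $\nu$-based hard-edge formula \eqref{eq:x1} against the known PII representation \eqref{eq:xo} of $p_{(1)}^{\rm soft}$ under the scaling \eqref{H2S_s}, your version merely making the comparison explicit at the level of logarithmic derivatives (where $s\,(\log p_{(1)}^{{\rm hard,}\,a})'(s)=\nu(s)-\tfrac{s}{4}+a$ and $(\log p_{(1)}^{\rm soft})'(\tau)=\sigma_{II}(\tau)$), and deriving \eqref{eq:x3} from $p_{(1)}^{\rm soft}\sim\rho_{(1)}^{\rm soft}$ exactly as the paper reads it off from \eqref{eq:xo}. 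The additional consistency check of degenerating \eqref{PIIIsigma} to \eqref{PII-Hode} is not in the paper's proof but is a sound supplementary verification.
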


\begin{proof}
We know from \cite[Eq.~(8.84)]{rmt_Fo} that
\begin{align}
 p_{(1)}^{{\rm soft}}(s) = \rho_{(1)}^{{\rm soft}}(s)
	 \exp \left( - \int_s^{\infty} \left( \sigma_{II}(t) - \frac{\mbox{d}}{\mbox{d}t} \log \rho_{(1)}^{{\rm soft}}(t) \right) 
			{\mbox d}t \right) , 
\label{eq:xo}
\end{align}
where $\rho_{(1)}^{{\rm soft}}(s) = K^{{\rm soft}}(s,s)$. On the other hand 
\begin{align}
p_{(1)}^{{\rm soft}}(s) = \lim_{a \rightarrow \infty} 2^{2/3} a^{4/3}
        p_{(1)}^{{\rm hard,}\, a} ( a^2 [1-2^{2/3}a^{-2/3}s] ).
\label{eq:x4}
\end{align}
Substituting \eqref{eq:x1} in the RHS and \eqref{eq:xo} in  the LHS of \eqref{eq:x4}, and comparing the 
respective large $s$ forms implies \eqref{eq:x2}. The boundary condition \eqref{eq:x3} is immediate from
\eqref{eq:xo}.
\end{proof}

It will be shown in the Appendix that the solution of the $\sigma$ form of PII \eqref{PII-Hode} with
$\alpha_1 = 2$ as required by \eqref{eq:x2a}, and subject to the boundary condition \eqref{eq:x3}, can be
generated from the well known Hasting-McLeod solution of PII.

The scaled form of $C$ in \eqref{eq:1.16}, as well as the auxiliary quantities $\mu$ \eqref{HE_DEnu} 
and $\xi$ \eqref{HE_xidefn} can now be found as a consequence of \eqref{eq:x2}.

\begin{proposition}  \label{prop:9new}
Let $s$ be related to $\tau$ by \eqref{H2S_s}, and define $ t = -2^{1/3}\tau $ as before. As $a \rightarrow \infty$
\begin{align}
 \mu(s) & \to  -2^{1/3}a^{4/3}p(t),
\label{H2S_mu}\\
 2C(s)+a & \to 2^{2/3}a^{2/3}\left[q(t)+\frac{2}{p(t)}\right] ,
\label{H2S_C}\\
 \xi(s) & \to  \tfrac{1}{4}a^2-2^{-2/3}a^{4/3}\left[ \left(q(t)+\frac{2}{p(t)}\right)^2-\tfrac{1}{2}p(t) \right].
\label{H2S_xi}
\end{align}
\end{proposition}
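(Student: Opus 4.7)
The plan is to transport the asymptotic expansion of Proposition~\ref{prop:8new} through the algebraic identities \eqref{HE_DEnu}, \eqref{HE_Crel}, \eqref{HE_xidefn}, which express $\mu$, $C$, $\xi$ in terms of $\nu$ and its derivatives in $s$. Two chain-rule substitutions convert every derivative of $\sigma_{II}(\tau)$ into PII Hamiltonian data: the scaling \eqref{H2S_s} gives $d\tau/ds = -2^{-2/3}a^{-4/3}$, and combining $t = -2^{1/3}\tau$ with \eqref{eq:x2a} together with the Okamoto formulas \eqref{PII-derH} yields the exact identities
\begin{equation*}
\frac{d\sigma_{II}}{d\tau} = -2^{-1/3}\,p(t), \qquad \frac{d^2\sigma_{II}}{d\tau^2} = 2\bigl(q(t)p(t) + 1\bigr),
\end{equation*}
where the value $\alpha_1 = 2$ enters via $\dot p = 2qp + \alpha_1$.

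For \eqref{H2S_mu}, differentiating once in $s$ the asymptotic $\nu(s) \sim s/4 - a - (a/2)^{2/3}\sigma_{II}(\tau)$ gives $\nu'(s) \sim \tfrac14 + 2^{-4/3}a^{-2/3}\,d\sigma_{II}/d\tau$; inserting into $\mu = 4s\nu'(s) - s$ and using $s \sim a^2$ produces $\mu \sim -2^{1/3}a^{4/3}p(t)$. For \eqref{H2S_C}, differentiating once more gives $4s\nu''(s) \sim -d^2\sigma_{II}/d\tau^2$ at leading order, so
\begin{equation*}
\mu'(s) - 2 \;=\; 4\nu'(s) - 3 + 4s\nu''(s) \;\sim\; -2 - \frac{d^2\sigma_{II}}{d\tau^2} + 2^{2/3}a^{-2/3}\frac{d\sigma_{II}}{d\tau}.
\end{equation*}
The ratio in \eqref{HE_Crel} then collapses after simplification to
\begin{equation*}
\frac{s(\mu'(s) - 2)}{\mu(s)} \sim 2^{-2/3}a^{2/3}\cdot\frac{-2 - d^2\sigma_{II}/d\tau^2}{d\sigma_{II}/d\tau} = 2^{2/3}a^{2/3}\Bigl(q(t) + \frac{2}{p(t)}\Bigr),
\end{equation*}
from which \eqref{H2S_C} follows once the $-3$ of \eqref{HE_Crel} is absorbed as a relatively lower-order error.

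The third relation \eqref{H2S_xi} is the subtle case, because both $C$ and $C+a$ are formally $O(a^{2/3})$, yet $\xi$ must recover a dominant piece of order $a^2$. The key is to symmetrise via $-C(C+a) = \bigl(a^2 - (2C+a)^2\bigr)/4$ and to use $\mu + s = 4s\nu'(s)$ to rewrite
\begin{equation*}
\xi = \frac{a^2 - (2C + a)^2}{16\,\nu'(s)}.
\end{equation*}
Expanding $1/(16\nu'(s)) = \tfrac14\bigl(1 + 2^{1/3}a^{-2/3}p(t) + \cdots\bigr)$ to first subleading order, and using \eqref{H2S_C} to obtain $(2C+a)^2 \sim 2^{4/3}a^{4/3}(q + 2/p)^2$, three contributions survive at orders $a^2$ and $a^{4/3}$: the leading $a^2/4$ piece from $a^2/(16\nu')$, the $-2^{-2/3}a^{4/3}(q + 2/p)^2$ piece from the squared term at zeroth order in the expansion, and the $+2^{-5/3}a^{4/3}\,p(t)$ piece arising as the cross-product of $a^2/4$ with the first subleading factor in $1/(16\nu'(s))$. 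Combining these yields \eqref{H2S_xi}. The principal obstacle is precisely this bookkeeping: because $\xi$ carries a leading contribution of order $a^2$ while the sought correction is only of order $a^{4/3}$, one must keep subleading terms throughout the expansion of $\nu'(s)$ and tally the cross contributions in $(a^2 - (2C+a)^2)/(16\nu'(s))$ with care.
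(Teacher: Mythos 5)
Your proposal is correct and follows exactly the route the paper intends — its proof is the one-line remark ``simple calculations using \eqref{eq:x2} and \eqref{HE_DEnu}, \eqref{HE_Crel}, \eqref{HE_xidefn}'' — and your chain-rule identities $d\sigma_{II}/d\tau = -2^{-1/3}p$, $d^2\sigma_{II}/d\tau^2 = 2(qp+1)$, together with the exact rewriting $\xi = \bigl(a^2-(2C+a)^2\bigr)/(16\nu')$ and the careful retention of the subleading term in $1/(16\nu')$, supply precisely the bookkeeping those calculations require. All three limits check out, including the delicate $O(a^{4/3})$ cross term $2^{-5/3}a^{4/3}p$ in \eqref{H2S_xi}.
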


\begin{proof}
Simple calculations using \eqref{eq:x2} and (\ref{HE_DEnu}), (\ref{HE_Crel}) and (\ref{HE_xidefn}) give (\ref{H2S_mu}),
(\ref{H2S_C}) and (\ref{H2S_xi}) respectively.
\end{proof}

Now we turn to task of deducing the appropriate scaling of the associated linear systems and their limits
in the hard edge to soft edge transition. There are a handful of references treating the problem of how the 
degeneration scheme of the Painlev\'e equations is manifested from the viewpoint of isomonodromic deformations. 
In comparison to the work \cite{Kapaev_2002} our situation is that of the
degenerate PV case with nilpotent matrix $ A_{\infty} $ as given by Eq. (11) in that work and its reduction to
the case of Eq. (13), again with nilpotent matrix $ A_{\infty} $, which corresponds to PII.
In the more complete examination of the coalescence scheme, as given in \cite{OO_2006}, our reduction is the limit
of the degenerate PV (P5-B case) to that P34, and therefore equivalent to PII. However many details we require are
missing or incomplete in \cite{Kapaev_2002} and \cite{OO_2006}, so we give a fuller account of this scaling and
limit for our example.

\begin{lemma}
Let $ \epsilon = 2^{1/6}a^{-1/3} $. The independent spectral variable scales as $ r = \epsilon^2 x $.
Under the hard-to-soft edge scaling limit $ \epsilon \to 0 $ the isomonodromic components scale as 
$ u = {\rm O}(\epsilon^{-4}) $ and $ v = {\rm O}(\epsilon^{-6}) $.  
\end{lemma}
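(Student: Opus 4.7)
The plan is to read off the scalings of $u$ and $v$ directly from the local power-series expansions \eqref{uExpr=0}--\eqref{vExpr=0} about $r=0$, since this is the regular singularity that remains finite after the substitution $r=\epsilon^2 x$ (the other singularity $r=1$ escapes to $x=\infty$). The recurrences \eqref{uCff}--\eqref{vCff} together with the seeds \eqref{uvCff:0}--\eqref{uvCff:1} completely determine the solution, and every coefficient is a polynomial in the data $s,\mu,C,\xi$ of Proposition~\ref{prop:9new}, so orders in $\epsilon=2^{1/6}a^{-1/3}$ can be tracked term by term.

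First I would translate Proposition~\ref{prop:9new} into the estimates $s=\mathrm{O}(\epsilon^{-6})$, $\mu=\mathrm{O}(\epsilon^{-4})$, $2C+a=\mathrm{O}(\epsilon^{-2})$, and $\xi=\mathrm{O}(\epsilon^{-6})$, together with the crucial refined estimate $\xi-\tfrac14 s=\mathrm{O}(\epsilon^{-4})$ coming from the cancellation of the $a^2/4$ pieces in $\xi$ and in $s/4$. With $\chi_0=1$ and the initial data $\mathrm{u}_0=s$, $\mathrm{v}_0=0$, $\mathrm{v}_1=-\tfrac13 s(\xi-\tfrac14 s)$, the leading terms after $r=\epsilon^2 x$ are
\begin{equation*}
\mathrm{u}_0\,r = s\,\epsilon^2 x = \mathrm{O}(\epsilon^{-4}),\qquad
\mathrm{v}_1\,r^2 = -\tfrac13 s\bigl(\xi-\tfrac14 s\bigr)\epsilon^4 x^2 = \mathrm{O}(\epsilon^{-6}),
\end{equation*}
which already produce the advertised powers. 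Induction on $m$ in \eqref{uCff}--\eqref{vCff} upgrades these to $\mathrm{u}_m=\mathrm{O}(\epsilon^{-6-2m})$ and $\mathrm{v}_m=\mathrm{O}(\epsilon^{-8-2m})$, so that $\mathrm{u}_m r^{m+1}=\mathrm{O}(\epsilon^{-4})$ and $\mathrm{v}_m r^{m+1}=\mathrm{O}(\epsilon^{-6})$ at every order.

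The main obstacle is converting the termwise bookkeeping into a genuine uniform estimate on a neighbourhood of $x=0$. The cleanest way is to make the rescaling manifest at the level of the Lax pair \eqref{HE_LP:c}: after the gauge transformation $\Psi=\mathrm{diag}(\epsilon^{-4},\epsilon^{-6})\tilde\Psi$ together with $r=\epsilon^2 x$, every entry of the transformed coefficient matrix $\epsilon^2 D^{-1}MD$ admits a finite limit as $\epsilon\to 0$. The one delicate entry is the $(2,1)$ slot, whose two dominant contributions $\tfrac14 s$ and $\xi/(r-1)\sim -\xi$ each blow up like $\epsilon^{-6}$ but cancel to leave exactly the residue $\mathrm{O}(\epsilon^{-4})$ identified above; the $(1,2)$ slot survives because $(\mu+s)/s\to 1$, while the $(1,1),(2,2)$ slots are tamed by the simple pole at $r=0$ matching the $\epsilon^{-2}$ prefactor. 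Standard continuous dependence of solutions of linear ODEs on holomorphic coefficients \cite{CL_1955} then transports this into uniform convergence of the rescaled series in $x$, confirming the asserted scalings $u=\mathrm{O}(\epsilon^{-4})$, $v=\mathrm{O}(\epsilon^{-6})$.
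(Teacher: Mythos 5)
Your proof is correct and follows essentially the same route as the paper: both arguments track the order in $\epsilon$ (equivalently in $a$) of the Frobenius coefficients $\mathrm{u}_m,\mathrm{v}_m$ through the recurrences \eqref{uCff}--\eqref{vCff}, with the essential input being the cancellation $\xi-\tfrac{1}{4}s=\mathrm{O}(\epsilon^{-4})$, and conclude that $\mathrm{u}_m r^{m+1}=\mathrm{O}(\epsilon^{-4})$, $\mathrm{v}_m r^{m+1}=\mathrm{O}(\epsilon^{-6})$ uniformly in $m$. Your final step --- gauging the Lax pair \eqref{HE_LP:c} by $\mathrm{diag}(\epsilon^{-4},\epsilon^{-6})$, checking that the rescaled coefficient matrix has a finite limit (including the delicate $(2,1)$ entry where $\tfrac{1}{4}s$ and $\xi/(r-1)$ cancel at order $\epsilon^{-6}$), and invoking continuous dependence --- supplies the uniformity-in-$\epsilon$ justification that the paper merely asserts with ``given that the expansions converge uniformly,'' so it is a strengthening of the same argument rather than a different one.
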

\begin{proof}
Let us denote the leading order scaling of the expansion coefficients given in \eqref{uExpr=0}, \eqref{vExpr=0} by
$ \text{u}_m = {\rm O}(a^{\omega_m}) $ and $ \text{v}_m = {\rm O}(a^{\lambda_m}) $. Employing the leading order
terms of the auxiliary variables \eqref{H2S_mu}, \eqref{H2S_C} and \eqref{H2S_xi} in the recurrence relations for the
coefficients \eqref{uCff} and \eqref{vCff} we deduce that
\begin{gather*}
   \omega_{m} = \max\{ \omega_{m-1}+\tfrac{2}{3}, \omega_{m-2}+\tfrac{4}{3}, \lambda_{m-1} \} ,
\\
   \lambda_{m} = \max\{ \omega_{m-1}+\tfrac{4}{3}, \omega_{m-2}+2, \lambda_{m-1}+\tfrac{2}{3} \} .
\end{gather*}
In fact all terms on the right-hand side balance each other and are satisfied by the single relation
$ \omega_{m} = \omega_{m-1}+\tfrac{2}{3} = \lambda_{m-1} $. The solution to these is 
$ \omega_m = \tfrac{2}{3}m+2 $, $ \lambda_m = \tfrac{2}{3}m+\tfrac{8}{3} $,
given the initial condition $ \omega_1 = \tfrac{8}{3} $. We then deduce that each term in the expansions has leading
order $ \text{u}_m r^{m+1} = {\rm O}(\epsilon^{-4}) $, $ \text{v}_m r^{m+1} = {\rm O}(\epsilon^{-6}) $,
independent of $ m $. Given that the expansions converge uniformly then the whole sums have the stated leading
order expansions.
\end{proof}

\begin{proposition}
Let the isomonodromic components scale as $ u(r;s) = U(x;t) $, $ v(r;s) = \epsilon^{-2} V(x;t) $, as only the relative
leading orders matter.
As $ \epsilon \to 0 $ the spectral derivative scales to one of the Lax pair for the second Painlev\'e equation
$ t,x \in \R $
\begin{multline}
 \partial_x \begin{pmatrix}
             U \\ V
            \end{pmatrix}
\\
 = \left\{ \begin{pmatrix}
            0 & 0 \\ -\frac{1}{2} & 0 
           \end{pmatrix} x
          +\begin{pmatrix}
            -q-\frac{\displaystyle 2}{\displaystyle p} & -1 \\ \frac{1}{2}(t-p)+\left[q+\frac{\displaystyle 2}{\displaystyle p}\right]^2 & q+\frac{\displaystyle 2}{\displaystyle p}
           \end{pmatrix}
           +\begin{pmatrix}
             1 & p \\ 0 & -1 
            \end{pmatrix} \frac{1}{x}
   \right\} \begin{pmatrix}
             U \\ V
            \end{pmatrix} ,
\label{H2S_spectral}
\end{multline}
and the deformation derivative scales to
\begin{equation}
  \partial_t \begin{pmatrix}
             U \\ V
             \end{pmatrix}
 = \left\{ \begin{pmatrix}
            0 & 0 \\ \frac{1}{2} & 0 
           \end{pmatrix} x
          +\begin{pmatrix}
            0 & 1 \\ 0 & -2\left[ q+\frac{\displaystyle 2}{\displaystyle p} \right]
           \end{pmatrix}
   \right\} \begin{pmatrix}
             U \\ V
            \end{pmatrix} .
\label{H2S_deform}
\end{equation}   
\end{proposition}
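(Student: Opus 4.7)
The plan is a direct asymptotic substitution: I carry the Lax pair \eqref{HE_LP:c}--\eqref{HE_LP:d} through the spectral rescaling $r=\epsilon^{2}x$, the deformation rescaling dictated by \eqref{H2S_s} with $t=-2^{1/3}\tau$, and the gauge transformation $u(r;s)=U(x;t)$, $v(r;s)=\epsilon^{-2}V(x;t)$, which amounts to conjugation by the diagonal matrix $D=\mathrm{diag}(1,\epsilon^{-2})$. Using the identities $a^{2/3}=2^{1/3}\epsilon^{-2}$, $a^{4/3}=2^{2/3}\epsilon^{-4}$ and $a^{2}=2\epsilon^{-6}$, together with the full asymptotic expansions for $\mu$, $C$ and $\xi$ supplied by Proposition~\ref{prop:9new}, I would expand each entry of the right-hand side matrix in powers of $\epsilon$ and read off the $\epsilon^{0}$ contribution.

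For the spectral pair this reduces to $\partial_{x}\widetilde\Psi=\epsilon^{2}D^{-1}M(\epsilon^{2}x;s)D\,\widetilde\Psi$; since the regular singularity at $r=1$ recedes to $x=\epsilon^{-2}$, I expand $1/(r-1)=-\sum_{k\geq 0}\epsilon^{2k}x^{k}$, and then only the $k=0$ and $k=1$ terms can survive after conjugation by $D$ and the overall prefactor $\epsilon^{2}$. The $M_{2}/x$ piece together with the $k=0$ contribution reconstructs the $1/x$ matrix and the constant middle matrix of \eqref{H2S_spectral}, while the $k=1$ contribution combined with $\xi\sim\tfrac{1}{2}\epsilon^{-6}$ supplies the nilpotent leading coefficient $-x/2$ in position $(2,1)$. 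For the deformation pair I would use the chain-rule identity $s\partial_{s}\sim\epsilon^{-2}\partial_{t}$, obtained from $s\sim a^{2}$ and $\partial_{s}=2^{-1/3}a^{-4/3}\partial_{t}$, which furnishes the matching factor of $\epsilon^{2}$ on the right after conjugation; a $k=1$ term against $\xi\sim\tfrac{1}{2}\epsilon^{-6}$ again produces the $x/2$ in position $(2,1)$ of \eqref{H2S_deform}.

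The step that is genuinely delicate, and which I expect to be the main obstacle, is the cancellation of the divergent $\epsilon^{-2}$ pieces in the $(2,1)$ entry of the spectral pair: the summand $s/4$ in \eqref{HE_LP:c} contributes $\epsilon^{4}s/4=\tfrac{1}{2}\epsilon^{-2}+\tfrac{t}{2}+o(1)$, whereas the $k=0$ part of the singular piece contributes $-\epsilon^{4}\xi=-\tfrac{1}{2}\epsilon^{-2}+(q+2/p)^{2}-p/2+o(1)$, so the $\tfrac{1}{2}\epsilon^{-2}$ divergences annihilate and leave exactly $(t-p)/2+(q+2/p)^{2}$, as required. This is precisely the point at which the subleading $a^{4/3}$-term of $\xi$ given in Proposition~\ref{prop:9new} is indispensable, and I would highlight that in the write-up. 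An entirely analogous cancellation between $N_{1}$ and the $k=0$ piece of $-M_{1}/(r-1)$ processes the deformation equation into \eqref{H2S_deform}. Beyond this careful bookkeeping of powers of $\epsilon$ nothing substantial is needed, because the uniform convergence of the local expansions \eqref{uExpr=0}--\eqref{vExpr=0} with the leading orders given by the preceding lemma legitimises passing the limit inside the expansion coefficients.
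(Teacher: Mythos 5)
Your proposal is correct and follows essentially the same route as the paper: substitute the asymptotic forms of $\mu$, $C$, $\xi$ from Proposition~\ref{prop:9new} into the Lax pair \eqref{HE_LP:c}--\eqref{HE_LP:d}, expand each matrix entry in powers of $\epsilon$ (the paper tabulates exactly the six entry combinations, including the $\epsilon^{-2}$ cancellation in the $(2,1)$ entry that you single out), and read off the limit after the relative rescaling of $u,v$. Your bookkeeping via conjugation by $\mathrm{diag}(1,\epsilon^{-2})$ and the geometric expansion of $1/(r-1)$ is just a tidier organization of the same computation, so nothing further is needed.
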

\begin{proof}
Our starting point is the Lax pair for the degenerate fifth Painlev\'e system given in Equations (\ref{HE_LP:c}) 
and (\ref{HE_LP:d}). Using the expansions (\ref{H2S_C},\ref{H2S_mu},\ref{H2S_xi}) and (\ref{H2S_s})  we deduce 
that the matrix elements appearing in this pair scale as
\begin{gather}
  -C-a/2+\frac{C+a/2}{r-1} \sim -\epsilon^{-2}h_0 ,
\\
  -C-a/2-\frac{C+a/2}{r-1} \sim \tfrac{1}{2}h_0 x,
\\
  \frac{1}{r}+\frac{C+a/2}{r-1} \sim \epsilon^{-2}\left( \frac{1}{x}-\half h_0 \right) - \half\left( xh_0+h_1 \right) ,
\\
  \quarter s-\frac{s}{\mu+s}\left[ (C+a/2)^2-\quarter a^2 \right]\frac{1}{r-1} \sim \epsilon^{-4}\left[ \half(t-x)+ \tfrac{1}{4}h_0^2-\tfrac{1}{2}p \right] ,
\\
   -\frac{\mu+s}{s}\frac{1}{r-1} \sim 1 ,
\\
  -\frac{\mu}{sr}+\frac{\mu+s}{s}\frac{1}{r-1} \sim \frac{p}{x}-1 + \epsilon^2(p-x) ,
\end{gather} 
where the abbreviation is
\begin{equation}
    h_0 = 2\left( q+\frac{2}{p} \right) .
\end{equation}
Using the scaling for $ u,v $ we deduce a meaningful limit as $ \epsilon\to 0 $ given in Equations~(\ref{H2S_spectral}) 
and (\ref{H2S_deform}). One can check that the compatibility of these two equations is ensured by the requirement 
that $ q,p $ satisfy the Hamiltonian equations of motion (\ref{PII-Heqn}).
\end{proof}

\begin{remark}
For general $ \alpha_1 $ the Lax pair of the PII system is 
\begin{multline}
 \partial_x Y
\\
 = \left\{ \begin{pmatrix}
            0 & 0 \\ -\frac{1}{2} & 0 
           \end{pmatrix} x
          +\begin{pmatrix}
            -q-\frac{\displaystyle \alpha_1}{\displaystyle p} & -1 \\
             \frac{1}{2}(t-p)+\left[q+\frac{\displaystyle \alpha_1}{\displaystyle p}\right]^2 & q+\frac{\displaystyle \alpha_1}{\displaystyle p}
           \end{pmatrix}
           +\begin{pmatrix}
             \frac{1}{2}\alpha_1 & p \\ 0 & -\frac{1}{2}\alpha_1 
            \end{pmatrix} \frac{1}{x}
   \right\} Y ,
\label{LPII:a}
\end{multline}
and
\begin{equation}
  \partial_t Y
 = \left\{ \begin{pmatrix}
            0 & 0 \\ \frac{1}{2} & 0 
           \end{pmatrix} x
          +\begin{pmatrix}
            0 & 1 \\ 0 & -2\left[ q+\frac{\displaystyle \alpha_1}{\displaystyle p} \right]
           \end{pmatrix}
   \right\} Y .
\label{LPII:b}
\end{equation}
Equation (\ref{LPII:a}) has the same form as the nilpotent case of Eq. (13) of \cite{Kapaev_2002}, and in addition both
members of the Lax pair (\ref{LPII:a},\ref{LPII:b}) are a variant of the system Eq. (31), given subsequently in \cite{Kapaev_2002}.
It has also been shown in \cite{KH_1999} that the Lax pair of this latter system is related to that of 
Flashka and Newell \cite{FN_1980} via an ``unfolding'' of the spectral variable supplemented by a gauge transformation
(see also 5.0.54,5 on pg. 175 of \cite{FIKN_2006}). In contrast the Lax pair of Jimbo, Miwa and Ueno \cite{JM_1981a} is not
equivalent to any of those mentioned above. In the hard-to-soft edge scaling the regular 
singularity at $ r=0 $ has transformed into the regular singularity at $ x=0 $; the regular singularity at 
$ r=1 $ has merged with the irregular one at $ r=\infty $ yielding an irregular singularity at $ x=\infty $
with its Poincar\'e rank increased by unity, now being $ \tfrac{3}{2} $. The symbol of the new system is
$ (1)(\tfrac{5}{2}) $.
\end{remark}

The solution we seek can be characterised in a precise way though its expansion about the regular
singularity $ x=0 $. 
\begin{lemma}
Let us assume $ |p(t)| > \delta > 0 $ and $ t, q(t), p(t) $ lie in compact subsets of $ \C $.
The isomonodromic components $ U, V $ have a convergent expansion about $ x=0 $, with indicial exponent $ \chi_0=1 $,
whose leading terms are 
\begin{align}
  U(x;t) & = 2x + \frac{p^2(2q^2-p+t)+2qp-4}{3p}x^2
\nonumber\\       & \qquad + \frac{p \left(p^2(2q^2-p+t)+4pq-8\right)(2q^2-p+t)+2(p^2-16q-4pt)}{48 p}x^3 
\nonumber\\       & \qquad\qquad+ {\rm O}(x^4) ,
\\
  V(x;t) & = \frac{p^2(2q^2-p+t)+8pq+8}{3p^2}x^2
\nonumber\\       & \quad + \frac{p\left(p^2(2q^2-p+t)+12pq+24\right)(2q^2-p+t)+2(5p^2+16q-8pt)}{24p^2}x^3
\nonumber\\       & \qquad\quad+ {\rm O}(x^4) .
\end{align}
\end{lemma}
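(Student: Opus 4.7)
The plan is to apply the Frobenius method at the regular singular point $x=0$ of the spectral Lax equation \eqref{H2S_spectral} and to fix the normalization by matching to the hard-edge expansion \eqref{uExpr=0}--\eqref{vExpr=0} inherited through the scaling limit. First I would recast \eqref{H2S_spectral} in the standard form $x\,\partial_x \Psi = (A_0 + x A_1 + x^2 A_2)\Psi$ with
\begin{equation*}
A_0 = \begin{pmatrix} 1 & p \\ 0 & -1 \end{pmatrix},\quad
A_1 = \begin{pmatrix} -q-2/p & -1 \\ \tfrac{1}{2}(t-p)+(q+2/p)^2 & q+2/p \end{pmatrix},\quad
A_2 = \begin{pmatrix} 0 & 0 \\ -\tfrac{1}{2} & 0 \end{pmatrix}.
\end{equation*}
The eigenvalues of $A_0$ are $\pm 1$, giving indicial exponents $\chi_0 = \pm 1$; I select $\chi_0 = 1$ because the hard-edge expansions \eqref{uExpr=0}--\eqref{vExpr=0} carry indicial exponent $1$, a property manifestly preserved by the overall scalings $r = \epsilon^2 x$, $u = \epsilon^{-4}U$, $v = \epsilon^{-6}V$.

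Substituting $\Psi = \sum_{m\geq 0}\Psi_m\,x^{m+1}$ into the ODE yields the recurrence
\begin{equation*}
[(m+1)I - A_0]\Psi_m = A_1\Psi_{m-1} + A_2\Psi_{m-2},\qquad m \geq 0,
\end{equation*}
with $\Psi_{-1}=\Psi_{-2}=0$. At $m=0$ the equation $(I-A_0)\Psi_0 = 0$ forces $V_0 = 0$ and leaves $U_0$ free. For $m\geq 1$ the matrix $(m+1)I-A_0$ is upper triangular with diagonal $(m,\,m+2)$, hence invertible, and the norm of its inverse is bounded uniformly in the parameters provided $|p|>\delta$. The free constant $U_0$ is then pinned down by tracing the normalization \eqref{uvCff:0} through the scalings: since $u = \epsilon^{-4}U$ and $r = \epsilon^2 x$ with $\epsilon = 2^{1/6}a^{-1/3}$, the hard-edge leading term $\text{u}_0(s)\,r = s\,r$ translates into $U \sim \epsilon^6 s\,x$, and $\epsilon^6 s = 2a^{-2}\cdot a^2(1 + O(a^{-2/3})) \to 2$ by \eqref{H2S_s}, so $U_0 = 2$.

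With $\Psi_0 = (2,0)^{\top}$ in hand, iterating the recurrence for $m=1$ and $m=2$ produces the explicit polynomial-in-$(q,p,t)$ coefficients stated in the lemma; the computation is mechanical --- two inversions of an upper-triangular $2\times 2$ matrix acting on a previously computed column --- but the bookkeeping needed to massage $\Psi_2$ into the precise numerator/denominator form displayed for $U(x;t)$ and $V(x;t)$ is the only genuinely tedious step.

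Convergence is standard Fuchs--Frobenius theory: the coefficient matrix of \eqref{H2S_spectral} is rational in $x$ with $x=0$ as its only finite singularity, so the Frobenius series has infinite radius of convergence. Uniformity in $(t,q(t),p(t))$ on compact subsets of $\C$ with $|p|>\delta$ follows from a majorant argument, using that $\lVert[(m+1)I-A_0]^{-1}\rVert = O(1/m)$ uniformly in the parameters while $\lVert A_1\rVert$ and $\lVert A_2\rVert$ remain bounded on the compact set. I expect no conceptual obstacle beyond the algebraic simplification in the preceding paragraph.
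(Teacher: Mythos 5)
Your proposal is correct and follows essentially the same route as the paper: a Frobenius expansion of the limiting Lax system \eqref{H2S_spectral} at the regular singular point $x=0$, selection of the exponent $\chi_0=1$ (forced by $\text{V}_0=0$), the normalization $\text{U}_0=2$ traced from the hard-edge coefficient $\text{u}_0=s$ through the scalings, and iteration of the recurrence — your matrix recurrence $[(m+1)I-A_0]\Psi_m=A_1\Psi_{m-1}+A_2\Psi_{m-2}$ is exactly the component recurrences \eqref{UVrecur:a}--\eqref{UVrecur:b} of the paper, and the entirety/convergence argument is the same. (The only quibble is that the bound $\lVert[(m+1)I-A_0]^{-1}\rVert=O(1/m)$ does not require $|p|>\delta$; that hypothesis is needed only to keep $\lVert A_1\rVert$ bounded.)
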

\begin{proof}
The scaling relations (\ref{H2S_s}, \ref{eq:x2}, \ref{H2S_mu}, \ref{H2S_C}, \ref{H2S_xi}) can be applied
term-wise to the expansions (\ref{uExpr=0}, \ref{vExpr=0}) along with the explicit results for the leading
coefficients (\ref{uvCff:0}, \ref{uvCff:1}). Alternatively one can compute the recurrence relations for the
local expansion of the system (\ref{H2S_spectral}) 
\begin{gather}
  U, V = \sum_{m=0}^{\infty} \text{U}_{m},\text{V}_{m} x^{\chi_0+m} .
\label{PII_x=0}
\end{gather}
In such an analysis one finds for the leading relation
$ p\left((\chi_0-1)\text{U}_{0}-p\text{V}_{0}\right)=0 $ and $ 2p^2(\chi_0+1)\text{V}_{0}=0 $. Clearly
for a well-defined solution at $ x=0 $ we must have $ \text{V}_{0}=0 $ and so $ \chi_0=1 $ with 
$ \text{U}_{0}\neq 0 $. The recurrence relations for the coefficients are given by
\begin{multline}
 2m(m+2)p\text{U}_{m} = [ p^2(2q^2-p+t)+(4-2m)qp-4m ]\text{U}_{m-1}
\\           
  +2p(pq-m)\text{V}_{m-1}-p^2\text{U}_{m-2} ,
\label{UVrecur:a}
\end{multline}
\begin{multline}
 2(m+2)p^2\text{V}_{m} = [ p^2(2q^2-p+t)+8qp+8 ]\text{U}_{m-1}
\\
  +2p(pq+2)\text{V}_{m-1}-p^2\text{U}_{m-2} .
\label{UVrecur:b}
\end{multline}
Given $ \text{U}_{0}=2 $ these recurrences generate the unique solution stated above. From these
recurrence relations it is easy to establish that the local expansions (\ref{PII_x=0}) define an entire
function of $ x $.
\end{proof}

We now have all the preliminary results to obtain the sought Painlev\'e II evaluation of 
$p_{(2)}^{{\rm soft}}$, specified originally as the Fredholm minor \eqref{eq:c1}.

\begin{proposition}
Let $p_{(1)}^{{\rm soft}}(t)$ be given by \eqref{eq:xo}. For some constant $C_0$ still to be determined, 
and boundary conditions on $U$ and $V$ still to be determined
\begin{multline}
  p_{(2)}^{{\rm soft}} (t,t-x) = \\
  C_0  p_{(1)}^{{\rm soft}}(t)\; t^{-5/2} \exp \left( -\tfrac{4}{3}t^{3/2} \right)
		\exp \left( \int_{2^{1/3}t}^{\infty} {\rm d}y \left\{ \left( 2q + \frac{4}{p} \right)(-y)
			-\ssqrt{2y} - \frac{5}{2y} \right\}    \right) 
\\ \times
	\left( U\partial_{x}V-V\partial_{x}U \right)( -2^{1/3}x; -2^{1/3}t). 
\label{eq:ss}
\end{multline}
\end{proposition}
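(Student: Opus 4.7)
The plan is to compute the $a\to\infty$ limit of \eqref{eq:1.16} under the joint scaling $s-z=a^2[1-2^{2/3}a^{-2/3}t]$, $s=a^2[1-2^{2/3}a^{-2/3}(t-x)]$. By the hard-to-soft edge transition of \cite{BF_2003}, $(2^{2/3}a^{4/3})^2\,p_{(2)}^{\rm hard,\,a}(s-z,s)\to p_{(2)}^{\rm soft}(t,t-x)$, so the task is to treat the three structural pieces of \eqref{eq:1.16} -- the algebraic prefactor, the exponential $\exp(\int_0^s r^{-1}(\nu+2C)\,dr)$, and the Wronskian $u\partial_z v-v\partial_z u$ -- while extracting the factor that converges to $p_{(1)}^{\rm soft}(t)$.

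The first step is to apply \eqref{eq:x1} at the argument $s-z$ and split $\int_0^s(\nu-r/4)r^{-1}\,dr=\int_0^{s-z}+\int_{s-z}^s$, so that the $e^{-s/4}$ in \eqref{eq:1.16} cancels and the hard edge one-point density $p_{(1)}^{\rm hard,\,a}(s-z)$ appears as an explicit factor. The soft-edge analogue of \eqref{eq:x4} then gives $2^{2/3}a^{4/3}p_{(1)}^{\rm hard,\,a}(s-z)\to p_{(1)}^{\rm soft}(t)$, supplying the $p_{(1)}^{\rm soft}(t)$ factor in \eqref{eq:ss}. Next I would insert the scalings $u=U$, $v=\epsilon^{-2}V$, $r=\epsilon^2\tilde{x}$ with $\epsilon=2^{1/6}a^{-1/3}$ from Section~\ref{S:H2S}; using $\partial_z=s^{-1}\epsilon^{-2}\partial_{\tilde{x}}$ this yields
\[
u\partial_z v-v\partial_z u = (s\,\epsilon^4)^{-1}\bigl(U\partial_{\tilde{x}}V - V\partial_{\tilde{x}}U\bigr),
\]
with arguments rewritten as $(-2^{1/3}x;-2^{1/3}t)$ via $t_{\rm PII}=-2^{1/3}\tau$. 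Combined with $z^2=2^{4/3}a^{8/3}x^2$, Stirling on $\Gamma(a+3)$, and the expansion $a\log(1-2^{2/3}a^{-2/3}\tau)=-2^{2/3}a^{1/3}\tau-2^{1/3}a^{-1/3}\tau^2-\tfrac43 a^{-1}\tau^3-\cdots$ of $s^a$, the algebraic prefactor and Wronskian combine into an $a$-dependent factor with exponential growth that must be balanced against $\exp(\mathcal{I})$, where $\mathcal{I}=\int_{s-z}^s(\nu-r/4)r^{-1}dr+\int_0^s 2C\,r^{-1}dr$.

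The principal obstacle is the asymptotic analysis of $\mathcal{I}$. Changing variables $r=a^2[1-2^{2/3}a^{-2/3}y]$ maps $r\in(0,s]$ to $y\in[t-x,y_0)$ with $y_0=2^{-2/3}a^{2/3}$, and substituting \eqref{H2S_C} for $2C$, \eqref{eq:x2} for $\nu-r/4$, and expanding $dr/r$ in powers of $a^{-2/3}$ decomposes $\mathcal{I}$ into a convergent part and $a$-divergent remainders. After the substitution $y'=2^{1/3}y$ that matches the PII spectral variable, the pointwise limit of the convergent part is
\[
\int_{2^{1/3}(t-x)}^{\infty}\Bigl\{(2q+\tfrac{4}{p})(-y')-\sqrt{2y'}-\tfrac{5}{2y'}\Bigr\}dy',
\]
reproducing the integrand of \eqref{eq:ss}. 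The counterterms $-\sqrt{2y'}$ and $-\tfrac{5}{2y'}$ are the leading and subleading large-$y'$ tails of $(2q+4/p)(-y')$, obtained from the Hastings--McLeod asymptotic $q(\tau)\sim\sqrt{-\tau/2}$ and its corrections (appropriately modified for the double-shifted B\"acklund transform with $\alpha_1=2$); they render the improper integral absolutely convergent and simultaneously absorb the $y$-dependent divergences that otherwise obstruct interchanging limit and integral.

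The most delicate step is then to verify that the remaining purely $a$-dependent divergences -- from Stirling, from the $s^a$ expansion, from $\int_{s-z}^s(\nu-r/4)r^{-1}dr\sim -2^{2/3}a^{1/3}x-\int_{t-x}^t\sigma_{II}(y)\,dy+o(1)$, and from the $y_0$-boundary contribution of the renormalised $2C$-integral -- cancel exactly, leaving the finite residue $C_0\,t^{-5/2}\exp(-\tfrac43 t^{3/2})$ in \eqref{eq:ss}. The Airy-type exponent $-\tfrac43 t^{3/2}$ matches naturally against the tail of $K^{\rm soft}(t,t)$ already sitting inside $p_{(1)}^{\rm soft}(t)$ via \eqref{eq:xo}, together with the mismatch between the extracted $p_{(1)}^{\rm soft}(t)$ and the finite-$a$ argument $s$ (which corresponds to $t-x$ on the soft edge). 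Once these cancellations are established and dominated convergence is applied on each matched pair, \eqref{eq:ss} follows; the constant $C_0$ is not pinned down by the limiting procedure alone and would be fixed separately.
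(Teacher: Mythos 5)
Your overall strategy is the paper's: take the $a\to\infty$ limit of \eqref{eq:1.16} using Propositions~\ref{prop:8new} and \ref{prop:9new} and the scaling of $u,v$, pull out a factor converging to $p_{(1)}^{\rm soft}$, and render the remaining $\left(2q+4/p\right)$ integral convergent by subtracting its large-argument tail $\ssqrt{2y}+5/(2y)$, which is exactly what produces the $t^{-5/2}\exp(-\tfrac43 t^{3/2})$ factor; $C_0$ is left open in both treatments. The execution differs in two respects. First, the paper never tracks the Stirling/$s^a$/boundary divergences you propose to cancel by hand: it absorbs every $s,z$-independent factor into a single constant $\hat C_a(s_0)$ attached to a floating reference point $s_0$, so the exponent becomes $\int_{s_0}^{s}\frac{dw}{w}[\nu-\tfrac w4+a+2C+a]$ whose integrand converges termwise to $(H+2q+4/p)(-y)$; since \cite{BF_2003} guarantees the scaled densities converge, the constant must converge too and nothing needs to be "cancelled". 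Your explicit bookkeeping is not wrong, but it is substantial extra labour that buys nothing here because the proposition does not claim a value for $C_0$. Relatedly, your extraction of $p_{(1)}^{{\rm hard},a}(s-z)$ by splitting the integral at $s-z$ puts the one-point factor at the argument conjugate to $s-z$ rather than to $s$ (the deformation variable), so the "mismatch" you defer to the final cancellation is a real piece of work the paper avoids by reading off $p_{(1)}^{\rm soft}(t)$ directly from the $H$-part of the integral via \eqref{eq:x2a} and \eqref{eq:xo}. Second, and this is the one point where your plan is materially incomplete: the coefficient $\tfrac52$ in the counterterm $5/(2y)$, which fixes the power $t^{-5/2}$ in \eqref{eq:ss}, must be derived precisely, and "the Hastings--McLeod asymptotic and its corrections, appropriately modified for the double-shifted B\"acklund transform with $\alpha_1=2$" is precisely the computation you have not done — the subleading terms depend on $\alpha_1$. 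The paper gets it cheaply and rigorously in the other direction: from $\rho_{(1)}^{\rm soft}(s)\sim\frac{1}{8\pi s}e^{-\frac43 s^{3/2}}$ one reads off $H(-y)\sim\ssqrt{2y}+1/y$, and the Hamilton equations \eqref{PII-derH} then force $p(-y)\sim\ssqrt{2/y}-2/y^2$ and $q(-y)\sim-\ssqrt{y/2}-3/(4y)$, hence $(2q+4/p)(-y)\sim\ssqrt{2y}+5/(2y)$. You should either reproduce that derivation or carry out the B\"acklund computation explicitly; as written, the key numerical exponent in \eqref{eq:ss} is asserted rather than proved.
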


\begin{proof}
Applying the gauge transformation (\ref{gaugeFxm}) to \eqref{eq:1.16} and absorbing the pre-factors $ \exp(-s/4) $ 
and $ s^{a} $ into the integral we have
\begin{multline*}
  p_{(2)}^{{\rm hard,}\, a} (s-z,s) = \hat{C}_a(s_0) \exp\left( \int^{s}_{s_0} \frac{{\rm d}w}{w} \left[\nu(w)-\frac{w}{4}+a+2C(w)+a\right] \right)
\\ \times
		\left( u\partial_z v - v\partial_z u \right)(z;s) ,
\end{multline*}
where $ \hat{C}_a(s_0) $ is a normalisation independent of $ s,z $ but dependent on $a$ and the reference point $ s_0 $.
We are now in a position to apply the limiting forms (\ref{eq:x2},\ref{H2S_C}) to this, thus obtaining
\begin{multline}
  p_{(2)}^{{\rm soft}} (t,t-x) 
\\
     = \widetilde{C}_0 \exp \left( \int_{2^{1/3}t}^{t_0} {\rm d}y \left( H + 2q + \frac{4}{p} \right) (-y) \right) 
	  	\left( U\partial_{x}V-V\partial_{x}U \right)( -2^{1/3}x; -2^{1/3}t).  
\label{eq:sn}
\end{multline}

To proceed further, we make use of \eqref{eq:x2a} and \eqref{eq:xo} to note that for suitable $\widetilde{C}_0$,
\begin{align}
\lim_{t_0\rightarrow \infty} \widetilde{C}_0 \exp \left(\int_{2^{1/3}t}^{t_0} {\rm d}y \, H(-y) \right) = p_{(1)}^{{\rm soft}}(t).   
\label{eq:su1}
\end{align}
Furthermore, since 
\begin{align}
   p_{(1)}^{{\rm soft}}(s)  \mathop{\sim}_{s \rightarrow \infty} \rho_{(1)}^{{\rm soft}}(s) ,
\label{eq:sf}
\end{align}
and
\begin{align}
\rho_{(1)}^{{\rm soft}}(s) =K^{{\rm soft}}(s,s) \mathop{\sim}_{s \rightarrow \infty} \frac{1}{8 \pi s} \exp(-\tfrac{4}{3}s^{3/2}) ,
\label{eq:sf1}
\end{align}
we must have
\begin{align}
H(-y) \mathop{\sim}_{y \rightarrow \infty} \ssqrt{2y} + \frac{1}{y}.	 \label{eq:sf2}
\end{align}
The Hamilton equations \eqref{PII-derH} then imply
\begin{align}
p(-y) & \mathop{\sim}_{y \rightarrow \infty} \ssqrt{\frac{2}{y}} - \frac{2}{y^2},	 
\label{eq:sf3} \\
q(-y) & \mathop{\sim}_{y \rightarrow \infty} -\; \ssqrt{\frac{y}{2}} - \frac{3}{4y},      
\label{eq:sf4} 
\end{align}
and thus
\begin{align*}
\left( 2q + \frac{4}{p} \right) (-y) & \mathop{\sim}_{y \rightarrow \infty}  \ssqrt{2y} + \frac{5}{2y}.     
\end{align*}
Consequentially, for suitable $\widetilde{C}_0$, 
\begin{multline}
\lim_{t_0 \rightarrow \infty} \widetilde{C}_0 \exp \left( \int_{2^{1/3}t}^{t_0} {\rm d}y \left( 2q + \frac{4}{p} \right) (-y) \right)
\\
  =  t^{-5/2} \exp \left(-\tfrac{4}{3}t^{3/2} \right)  \exp \left( \int_{2^{1/3}t}^{\infty} {\rm d}y \left\{ \left( 2q + \frac{4}{p} \right)(-y)-\ssqrt{2y}-\frac{5}{2y} \right\} \right) .
\label{eq:su2}
\end{multline}
Substituting \eqref{eq:su1} and \eqref{eq:su2} in \eqref{eq:sn} gives \eqref{eq:ss}.
\end{proof}

It remains to specify $C_0$ in \eqref{eq:ss}, and furthermore to specify the $x,t \rightarrow \infty$ asymptotic form 
of $U$ and $V$. For this we require the fact, which follows from \eqref{eq:c1}, that for $t,x,t-x \rightarrow \infty$,
\begin{align}
p_{(2)}^{{\rm soft}}(t,t-x) \sim \rho_{(1)}^{{\rm soft}}(t) \rho_{(1)}^{{\rm soft}}(t-x).    
\label{eq:p1p}
\end{align}

\begin{proposition} \label{prop:final_edit_prop}
In \eqref{eq:ss}
\begin{align}
C_0 = \frac{1}{4\pi} ,     
\label{eq:ca1}
\end{align}
and furthermore, for $x,t, t-x \rightarrow \infty $ we have
\begin{align}
U(-x;-t) \sim a(x,t) \exp \left( \tfrac{\ssqrt{2}}{3}\left[t^{3/2} - (t-x)^{3/2} \right] \right) , 
\label{eq:ca2} \\
V(-x;-t) \sim b(x,t) \exp \left( \tfrac{\ssqrt{2}}{3}\left[t^{3/2} - (t-x)^{3/2} \right] \right) , 
\label{eq:ca3}
\end{align}
with
\begin{align}
a(x,t) &= \frac{t^{5/4}}{(t-x)^{1/4}} ,
\label{eq:ca4}  \\
b(x,t) &= - \left( \ssqrt{\frac{t}{2}}  - \ssqrt{\frac{t-x}{2}} \right) a(x,t) .
\label{eq:ca5}
\end{align}
\end{proposition}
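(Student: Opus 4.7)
The plan is to determine $C_0$ and fix the four amplitude functions simultaneously by matching the two independent characterizations of $p_{(2)}^{\rm soft}(t,t-x)$ available in the regime $t, x, t-x \to \infty$: the Painlev\'e representation \eqref{eq:ss} on one side, and the factorization \eqref{eq:p1p} on the other. Combining \eqref{eq:p1p} with the asymptotic $\rho_{(1)}^{\rm soft}(t) \sim \tfrac{1}{8\pi t}\exp(-\tfrac{4}{3}t^{3/2})$ from \eqref{eq:sf1} gives the concrete target
\[
p_{(2)}^{\rm soft}(t, t-x) \sim \frac{1}{64\pi^2\, t(t-x)} \exp\!\left(-\tfrac{4}{3}\bigl[t^{3/2} + (t-x)^{3/2}\bigr]\right).
\]

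First I would derive the asymptotic behavior of $U(-x;-t), V(-x;-t)$ as $x, t, t-x \to \infty$ directly from the Lax pair \eqref{H2S_spectral}-\eqref{H2S_deform}. The spectral equation has an irregular singularity of Poincar\'e rank $\tfrac{3}{2}$ at $x = \infty$ with nilpotent leading matrix; a standard WKB analysis of the pencil $Ax + B$ yields eigenvalues $\pm\sqrt{-x/2}$ for large negative $x$, producing two modes with exponential factors $\exp\!\bigl(\pm\tfrac{\sqrt{2}}{3}(-x)^{3/2}\bigr)$, and the subdominant one is selected by the boundary condition inherited from the hard-to-soft limit. The $t$-dependence of the exponent follows from feeding the asymptotics \eqref{eq:sf3}-\eqref{eq:sf4}, in particular $q + 2/p \sim -\sqrt{t/2}$, into \eqref{H2S_deform}, producing the combined exponent $\phi(x, t) = \tfrac{\sqrt{2}}{3}[t^{3/2} - (t-x)^{3/2}]$ of \eqref{eq:ca2}-\eqref{eq:ca3}. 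The leading eigenvector of $Ax + B$ gives the relative ratio $V/U$, yielding the relation \eqref{eq:ca5}, while tracking the WKB subleading prefactor and matching to the Frobenius expansion at $x = 0$ from the preceding Lemma fixes the absolute amplitude $a(x, t) = t^{5/4}/(t-x)^{1/4}$ of \eqref{eq:ca4}.

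Since $U$ and $V$ share the common exponential $e^{\phi}$, the Wronskian $U \partial_x V - V \partial_x U$ reduces at leading order to $(ab' - a'b)\, e^{2\phi}$, a direct computation giving $ab' - a'b = -t^{5/2}/[2\sqrt{2}(t-x)]$. Inserting the rescaling $(x, t) \mapsto (-2^{1/3}x, -2^{1/3}t)$ produces the scaled exponent $\tfrac{4}{3}[t^{3/2} - (t-x)^{3/2}]$ and cleans the prefactor to $t^{5/2}/[2(t-x)]$. Substituting into \eqref{eq:ss} --- noting that the integral correction $\int_{2^{1/3}t}^{\infty}\{\cdots\}\,dy$ tends to zero because its integrand decays like $y^{-3/2}$ by \eqref{eq:sf3}-\eqref{eq:sf4} --- yields the prefactor $C_0/[16\pi t(t-x)]$ multiplying the correct double exponential; matching against the target forces $C_0 = 1/(4\pi)$.

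The main obstacle is the first step: rigorously identifying the specific solution of the Lax pair \eqref{H2S_spectral}-\eqref{H2S_deform} selected by the hard-to-soft limit, since at $x = -\infty$ the solution space is two-dimensional and the correct element is singled out by Stokes data inherited through the coalescence of the regular singularity $r = 1$ with the irregular one $r = \infty$ of \eqref{HE_LP:c}. Rather than redo the monodromy analysis in the PII setting, the cleanest route is to propagate the convergent Frobenius expansion at $x = 0$ outward and exploit the already-established connection formulas of the PIII$^{\prime}$ Lax pair under the limit; the subtle point is controlling the uniform validity of the WKB approximation and of the subdominant corrections in the regime where $t$, $x$ and $t-x$ are simultaneously large.
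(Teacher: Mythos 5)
Your proposal follows essentially the same route as the paper: the paper's proof likewise matches \eqref{eq:ss} against the factorization \eqref{eq:p1p} with the Airy-kernel asymptotics \eqref{eq:sf1}, extracts \eqref{eq:ca5} from the large-argument limit of the spectral derivative \eqref{H2S_spectral}, and then (in its ``refinement'') solves the leading-order Lax system exactly in terms of Airy functions of $2^{-1/3}(x-t)$ — the exact counterpart of your WKB analysis, with your selection of the subdominant mode appearing there as the condition $\beta=0$. Your direct evaluation of the Wronskian as $(ab'-a'b)e^{2\phi}=c'a^2e^{2\phi}$ is a slightly cleaner equivalent of the paper's step of pushing the Airy asymptotics to second order after the leading-order cancellation, and it reproduces the same constant $C_0=1/(4\pi)$; you also correctly identify the connection/normalization issue that the paper itself only resolves heuristically.
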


\begin{proof}
Substituting \eqref{eq:p1p} in the LHS of \eqref{eq:ss} and making use of \eqref{eq:sf} and \eqref{eq:sf1},
it follows from \eqref{eq:ss} that in the asymptotic region in question 
\begin{multline}
\frac{1}{8 \pi(t-x)} \exp \left( -\tfrac{4}{3}(t-x)^{3/2} \right)
\\
  \sim C_0 t^{-5/2}  \exp \left( -\tfrac{4}{3}t^{3/2} \right) 
		\left( U \partial_x V -V \partial_x U \right) (-2^{1/3}x;-2^{1/3}t).
\label{ap1}
\end{multline}
We see immediately from this that \eqref{eq:ca2} and \eqref{eq:ca3} are valid, for $a(x,t)$, $b(x,t)$ algebraic
functions in $x$ and~$t$ satisfying
\begin{multline}
\frac{1}{8 \pi} \frac{ t^{5/2} } {t-x} \sim 
	C_0  \left[ \left. a(2^{1/3}x,2^{1/3}t)  \partial_y b (-y, 2^{1/3}t) \right|_{y=-2^{1/3}x}  \right.
\\ 	\left.
			  - \left. b(2^{1/3}x,2^{1/3}t)  \partial_y a(-y, 2^{1/3}t) \right|_{y=-2^{1/3}x }  \right] .    
\label{eq:ap2}
\end{multline}
On the other hand, we read off from \eqref{H2S_spectral} that for $x,t \rightarrow \infty$
\begin{align*}
\partial_y \left. U(y;-t) \right|_{y=-x} \sim \left( -q(-t) - \frac{2}{p(-t)} \right) U(-x;-t) - V(-x;-t).
\end{align*}
Making use of the leading terms in \eqref{eq:sf3} and \eqref{eq:sf4} as well as \eqref{eq:ca2}, \eqref{eq:ca3} 
it follows that \eqref{eq:ca5} holds true. This latter formula substituted in \eqref{eq:ap2} implies, 
upon choosing $C_0$ according to \eqref{eq:ca1}, that 
\begin{align*}
 a(x,t)^2 = \frac{t^{5/2}}{\ssqrt{t-x}} ,
\end{align*}
and \eqref{eq:ca4} follows upon taking the positive square root.

We can offer a refinement on the above argument by examining in more detail the isomonodromic system 
in the asymptotic regime $ t\to -\infty $. In this regime the leading order of the differential equations 
(\ref{H2S_spectral}) and (\ref{H2S_deform}) become
\begin{equation}
 \partial_x \begin{pmatrix}
             U \\ V
            \end{pmatrix}
 \sim \begin{pmatrix}
            -\sqrt{-\frac{t}{2}} & -1 \\ -\frac{1}{2}x & \sqrt{-\frac{t}{2}}
   \end{pmatrix} \begin{pmatrix}
             U \\ V
                 \end{pmatrix} ,
\label{Asympt_spectral}
\end{equation}
and
\begin{equation}
  \partial_t \begin{pmatrix}
             U \\ V
             \end{pmatrix}
 \sim \begin{pmatrix}
            0 & 1 \\ \frac{1}{2}x & -2\sqrt{-\frac{t}{2}} 
           \end{pmatrix}
           \begin{pmatrix}
             U \\ V
            \end{pmatrix} .
\label{Asympt_deform}
\end{equation}
In retaining only the leading order terms we have also implicitly assumed that $ x\to \infty $ because
in the regime as $ t\to -\infty $ our solution $ p $ is vanishing, which is the location of the only non-zero,
finite singularity. Clearly
\begin{equation}
  (\partial_x+\partial_t) \begin{pmatrix}
             U \\ V
             \end{pmatrix}
 \sim \begin{pmatrix}
            -\sqrt{-\frac{t}{2}} & 0 \\ 0 & -\sqrt{-\frac{t}{2}} 
           \end{pmatrix}
           \begin{pmatrix}
             U \\ V
            \end{pmatrix} ,
\label{}
\end{equation}
so that $ U(x;t),V(x;t) \sim f(t)g_{1,2}(t-x) $. This means that $ \partial_t f = -\sqrt{-\frac{t}{2}}f $
with a solution proportional to $ \exp\frac{\sqrt{2}}{3}(-t)^{3/2} $. For the remaining factor we have
\begin{equation}
  \begin{pmatrix}
             g_{1}' \\ g_{2}'
  \end{pmatrix}
 \sim \begin{pmatrix}
            \sqrt{-\frac{t}{2}} & 1 \\ \frac{1}{2}x & -\sqrt{-\frac{t}{2}} 
      \end{pmatrix}
      \begin{pmatrix}
             g_{1} \\ g_{2}
      \end{pmatrix} ,
\label{}
\end{equation}
or, in terms of the components
\begin{equation}
   g_{1}'' = \tfrac{1}{2}(x-t)g_{1}, \qquad g_{2} = g_{1}'-\sqrt{-\frac{t}{2}}g_{1}.
\end{equation} 
Thus, with $ {\rm Ai} $ and $ {\rm Bi} $ the two linearly independent solutions of the Airy equation
\cite[\textsection 9.2]{DLMF}, we have
\begin{equation}
  U(x;t) \sim \exp\frac{\sqrt{2}}{3}(-t)^{3/2}
  \left\{ \alpha(t){\rm Ai}(2^{-1/3}(x-t))+\beta(t){\rm Bi}(2^{-1/3}(x-t)) \right\} ,
\end{equation} 
and
\begin{multline}
  V(x;t) \sim \exp\frac{\sqrt{2}}{3}(-t)^{3/2}
\\ \times
  \left\{ -\alpha(t)\left[ 2^{-1/3}{\rm Ai}'(2^{-1/3}(x-t))+\sqrt{-\frac{t}{2}}{\rm Ai}(2^{-1/3}(x-t)) \right] \right.
\\
  \left.   -\beta(t)\left[ 2^{-1/3}{\rm Bi}'(2^{-1/3}(x-t))+\sqrt{-\frac{t}{2}}{\rm Bi}(2^{-1/3}(x-t)) \right] \right\} .
\end{multline}
From these we compute
\begin{multline}
 U\partial_x V-V\partial_x U \sim 
\\
   \exp2\frac{\sqrt{2}}{3}(-t)^{3/2}
                \left\{ -\tfrac{1}{2}(x-t)\left[ \alpha{\rm Ai}+\beta{\rm Bi} \right]^2+2^{-2/3}\left[ \alpha{\rm Ai}'+\beta{\rm Bi}' \right]^2 \right\} .
\end{multline} 
Clearly $ \beta=0 $ in order to suppress the dominant terms, and by employing the exponential asymptotics 
of the Airy function through to second order (the leading order cancels exactly) we have
\begin{equation}
  \left( U\partial_x V-V\partial_x U \right)(x;t) \sim \frac{2^{-4/3}\alpha^2}{4\pi(x-t)}
  \exp\left[ 2\frac{\sqrt{2}}{3}(-t)^{3/2}-2\frac{\sqrt{2}}{3}(x-t)^{3/2} \right] .
\end{equation} 
Employing this into the factorisation of $ p_{(2)}^{\rm soft}(t,t-x) $ as $ t, t-x, x \to \infty $ we
deduce
\begin{equation}
   C_{0}\alpha(-2^{1/3}t)^2 = 2^{2/3}t^{5/2} ,
\end{equation} 
and with $ C_{0}=1/4\pi $ we infer $ \alpha(t)=2^{11/12}\pi^{1/2}(-t)^{5/4} $. This then precisely
reproduces the boundary conditions given by (\ref{eq:ca2}, \ref{eq:ca3}) along with (\ref{eq:ca4}, \ref{eq:ca5}).
A by-product of this argument is that the isomonodromic components can be represented in the forms
\begin{gather}
  U(x;t) \sim -t\frac{{\rm Ai}(2^{-1/3}(x-t))}{{\rm Ai}(-2^{-1/3}t)} ,
\\
  V(x;t) \sim t\frac{2^{-1/3}{\rm Ai}'(2^{-1/3}(x-t))+\sqrt{-\frac{t}{2}}{\rm Ai}(2^{-1/3}(x-t))}{{\rm Ai}(-2^{-1/3}t)} ,
\end{gather}
although this is still only valid in the regime $ t, t-x, x \to -\infty $.
\end{proof}

We remark that as written (\ref{eq:ss}) is not well defined for $t \le 0$. In this region
we should use instead (\ref{eq:sn}), with a suitable $t_0$ to make use of (\ref{eq:ss}) for
$t > 0$ so that $\tilde{C}_0$ can be specified.

In \cite{FW_2007} the analogue of (\ref{eq:ss}), Eq.~(2.1), was used to provide high precision
numerics for the spacing between the two smallest eigenvalues at the hard edge.
But to use (\ref{eq:ss}) to compute the spacing distribution (\ref{eq:c2}) presents additional challenges
to obtain control on the accuracy. The essential problem faced in comparison to \cite{FW_2007},
and in also in comparison to the study \cite{PS_2004}
relating to PDEs based on the Hasting-McLeod PII transcendent, is that our
boundary conditions involve algebraic terms, and thus cannot be determined to
arbitrary accuracy. In relation to $p(-y)$ and $q(-y)$ this can perhaps be overcome by
using the theory of the Appendix to map to the Hasting-McLeod solution. But even so,
the problem of extending the accuracy of the algebraic terms $a(x,t)$ and $b(x,t)$ in
(\ref{eq:ca2}) and (\ref{eq:ca3}) would remain. While these points remain under investigation,
the problem of determining numerics for (\ref{eq:c2}) can be tackled by using a
variant of the Fredholm type expansion (\ref{eq:c1}), as we will now proceed to detail.

\section{Numerical Evaluation of Moments}\label{sect:numerical}
\setcounter{equation}{0}

We provide some numerical data for the distribution of the spacing of the two largest eigenvalues
based on the accurate numerical evaluation of operator determinants that is surveyed in \cite{MR2895091}.
The joint probability distribution of the two largest eigenvalues is amenable to this method since it is 
given in terms of a $2\times 2$ operator matrix determinant:
\begin{multline*}
F(x,y) = \int_{-\infty}^x\int_{-\infty}^y p^\text{soft}_{(2)}(\xi,\eta)\,d\xi\,d\eta\\*[1mm]
= \begin{cases}
E_2^\text{soft}(0;(x,\infty)) & (x \leq y),\\*[2mm]
E_2^\text{soft}(0;(y,\infty))  - \left.\dfrac{\partial}{\partial z} \det\left(I-
\begin{pmatrix}
z \,K^\text{soft} & K^\text{soft} \\
z \,K^\text{soft} & K^\text{soft}
\end{pmatrix}_{|L^2(y,x)\oplus L^2(x,\infty)}\right)\right|_{z=1} & (x > y).
\end{cases}
\end{multline*}
Here, the differentiation with respect to $z$ can be accurately computed by the Cauchy integral formula in the
complex domain \cite{MR2754188}. It has been used in \cite{MR2895091} to calculate the correlation between the
two largest eigenvalues to 11 digits accuracy
\[
\rho = 0.50564\,72315\,9.
\]
This number can be calculated without any differentiation of the joint distribution function $F$ since,
according to a  lemma of Hoeffding \cite{Hoeffding}, the covariance is given by 
\[
\text{cov} = \int_{-\infty}^\infty \int_{-\infty}^\infty (F(x,y)- F(x,\infty)F(\infty,y))\,dx\,dy.
\]
In contrast, the spacing distribution
\[
G(s) = \int_0^s A^\text{soft}(\sigma)\,d\sigma = \int_{-\infty}^\infty \frac{\partial }{\partial y} F(x,y)|_{x=y+s}\,dy
\]
requires numerical differentiation in the real domain which causes a loss of a couple of digits. The differentiation
is done by spectral collocation in Chebyshev points of the first kind and $G$ is numerically represented by polynomial 
interpolation in the same type of points; Table~\ref{table:numbers} tabulates the values of $A^\text{soft}(s)$ and 
$G(s)$ for $s=0(0.05)8.95$ to an absolute accuracy of 8 digits based on a polynomial representation of degree $64$ that 
is accurate to about 9 to 10 digits. Fig.~\ref{fig:1} plots the density function as compared to a histogram obtained 
from $10\,000$ draws from a $1000\times 1000$ GUE at the soft edge.

\begin{table}[tbp]
\caption{\small The first four statistical moments of the density function $A^\text{soft}(s)$.}\label{table:moments}
{\small
\begin{tabular}{cccc}
mean & variance & skewness & excess kurtosis \\*[0.2mm]\hline\\*[-3.5mm]
$1.90435\,049$  &  $0.68325\,206$  &  $0.56229\,2$  &  $0.27009$
\end{tabular}}
\end{table}

The moments of the random variable $S$ representing the spacing are obtained from the following derivative-free 
formulae obtained from partial integration:
\[
\E(S^n) = \int_0^\infty s^n \,d G(s) =  n\int_0^\infty s^{n-1} (1-G(s))\,ds\qquad (n=1,2,\ldots).
\]
This way we have obtained the first four statistical moments shown in Table~\ref{table:moments}; estimates
of the approximation errors by calculations to higher accuracy indicate the given digits to be correctly truncated.
The total computing time was 5 hours for the solution and 30 hours for the higher accuracy control calculation.

\begin{figure}[tbp]
\begin{center}
{\includegraphics[width=0.75\textwidth]{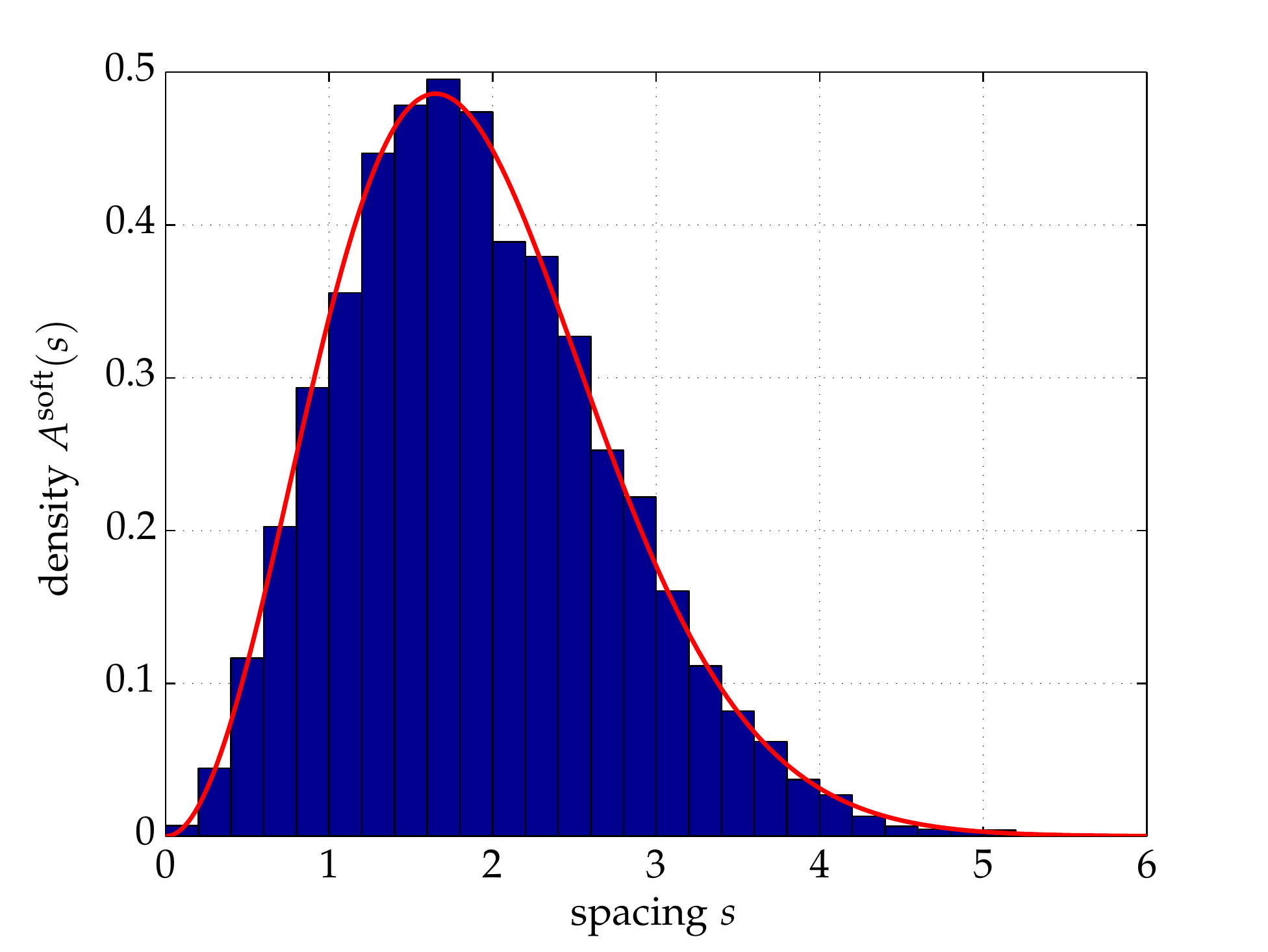}}
\end{center}
\caption{\small A plot of the density function $A^\text{soft}(s)$ as compared to a histogram of $10\,000$ draws 
from a $1000\times 1000$ GUE at the soft edge.} 
\label{fig:1}
\end{figure}

\begin{table}[tbp]
\caption{\small Values of the probability density and distribution function for the spacing $s$ between the two largest eigenvalues; with $s=0(0.05)8.95$.}\label{table:numbers}
{\scriptsize
\begin{minipage}{0.3\textwidth}
\begin{verbatim}
0.00  0.00000000  0.00000000
0.05  0.00124877  0.00002082
0.10  0.00498037  0.00016627
0.15  0.01115087  0.00055952
0.20  0.01968790  0.00132082
0.25  0.03049179  0.00256611
0.30  0.04343713  0.00440570
0.35  0.05837484  0.00694304
0.40  0.07513443  0.01027356
0.45  0.09352660  0.01448370
0.50  0.11334613  0.01965002
0.55  0.13437484  0.02583847
0.60  0.15638476  0.03310386
0.65  0.17914131  0.04148939
0.70  0.20240646  0.05102647
0.75  0.22594192  0.06173454
0.80  0.24951215  0.07362123
0.85  0.27288721  0.08668250
0.90  0.29584551  0.10090300
0.95  0.31817624  0.11625658
1.00  0.33968157  0.13270686
1.05  0.36017860  0.15020793
1.10  0.37950094  0.16870513
1.15  0.39750012  0.18813596
1.20  0.41404648  0.20843092
1.25  0.42902996  0.22951455
1.30  0.44236044  0.25130636
1.35  0.45396785  0.27372186
1.40  0.46380199  0.29667358
1.45  0.47183210  0.32007198
1.50  0.47804618  0.34382651
1.55  0.48245010  0.36784642
1.60  0.48506656  0.39204172
1.65  0.48593385  0.41632392
1.70  0.48510454  0.44060682
1.75  0.48264401  0.46480718
1.80  0.47862894  0.48884531
1.85  0.47314581  0.51264560
1.90  0.46628929  0.53613699
1.95  0.45816068  0.55925333
2.00  0.44886644  0.58193363
2.05  0.43851664  0.60412238
2.10  0.42722362  0.62576958
2.15  0.41510061  0.64683091
2.20  0.40226050  0.66726769
2.25  0.38881472  0.68704686
2.30  0.37487223  0.70614088
2.35  0.36053855  0.72452757
2.40  0.34591504  0.74218991
2.45  0.33109815  0.75911585
2.50  0.31617892  0.77529802
2.55  0.30124247  0.79073346
2.60  0.28636770  0.80542330
2.65  0.27162706  0.81937247
2.70  0.25708634  0.83258934
2.75  0.24280470  0.84508543
2.80  0.22883462  0.85687501
2.85  0.21522206  0.86797485
2.90  0.20200658  0.87840384
\end{verbatim}
\end{minipage}
\hfil
\begin{minipage}{0.3\textwidth}
\begin{verbatim}
2.95  0.18922158  0.88818269
3.00  0.17689453  0.89733363
3.05  0.16504734  0.90588014
3.10  0.15369662  0.91384664
3.15  0.14285408  0.92125827
3.20  0.13252688  0.92814064
3.25  0.12271804  0.93451960
3.30  0.11342681  0.94042107
3.35  0.10464906  0.94587084
3.40  0.09637765  0.95089442
3.45  0.08860281  0.95551689
3.50  0.08131251  0.95976278
3.55  0.07449279  0.96365598
3.60  0.06812806  0.96721964
3.65  0.06220145  0.97047609
3.70  0.05669506  0.97344679
3.75  0.05159022  0.97615229
3.80  0.04686776  0.97861218
3.85  0.04250818  0.98084511
3.90  0.03849187  0.98286872
3.95  0.03479928  0.98469969
4.00  0.03141105  0.98635372
4.05  0.02830817  0.98784555
4.10  0.02547209  0.98918899
4.15  0.02288475  0.99039691
4.20  0.02052876  0.99148132
4.25  0.01838737  0.99245336
4.30  0.01644456  0.99332336
4.35  0.01468507  0.99410087
4.40  0.01309441  0.99479468
4.45  0.01165890  0.99541290
4.50  0.01036561  0.99596294
4.55  0.00920246  0.99645163
4.60  0.00815809  0.99688517
4.65  0.00722193  0.99726924
4.70  0.00638415  0.99760900
4.75  0.00563563  0.99790914
4.80  0.00496793  0.99817391
4.85  0.00437327  0.99840715
4.90  0.00384450  0.99861234
4.95  0.00337504  0.99879259
5.00  0.00295889  0.99895073
5.05  0.00259055  0.99908928
5.10  0.00226503  0.99921050
5.15  0.00197778  0.99931642
5.20  0.00172467  0.99940884
5.25  0.00150198  0.99948939
5.30  0.00130633  0.99955949
5.35  0.00113470  0.99962042
5.40  0.00098434  0.99967332
5.45  0.00085281  0.99971917
5.50  0.00073792  0.99975887
5.55  0.00063769  0.99979321
5.60  0.00055039  0.99982286
5.65  0.00047444  0.99984843
5.70  0.00040846  0.99987047
5.75  0.00035123  0.99988943
5.80  0.00030164  0.99990572
5.85  0.00025873  0.99991970
\end{verbatim}
\end{minipage}
\hfil
\begin{minipage}{0.3\textwidth}
\begin{verbatim}
5.90  0.00022166  0.99993169
5.95  0.00018967  0.99994195
6.00  0.00016210  0.99995073
6.05  0.00013837  0.99995823
6.10  0.00011798  0.99996462
6.15  0.00010047  0.99997007
6.20  0.00008546  0.99997471
6.25  0.00007260  0.99997865
6.30  0.00006161  0.99998200
6.35  0.00005222  0.99998484
6.40  0.00004421  0.99998725
6.45  0.00003739  0.99998928
6.50  0.00003158  0.99999100
6.55  0.00002665  0.99999246
6.60  0.00002246  0.99999368
6.65  0.00001890  0.99999471
6.70  0.00001590  0.99999558
6.75  0.00001335  0.99999631
6.80  0.00001120  0.99999692
6.85  0.00000939  0.99999743
6.90  0.00000786  0.99999786
6.95  0.00000657  0.99999822
7.00  0.00000549  0.99999853
7.05  0.00000458  0.99999878
7.10  0.00000382  0.99999899
7.15  0.00000318  0.99999916
7.20  0.00000264  0.99999931
7.25  0.00000220  0.99999943
7.30  0.00000182  0.99999953
7.35  0.00000151  0.99999961
7.40  0.00000125  0.99999968
7.45  0.00000104  0.99999973
7.50  0.00000086  0.99999978
7.55  0.00000071  0.99999982
7.60  0.00000058  0.99999985
7.65  0.00000048  0.99999988
7.70  0.00000039  0.99999990
7.75  0.00000032  0.99999992
7.80  0.00000027  0.99999993
7.85  0.00000022  0.99999995
7.90  0.00000018  0.99999996
7.95  0.00000015  0.99999996
8.00  0.00000012  0.99999997
8.05  0.00000010  0.99999998
8.10  0.00000008  0.99999998
8.15  0.00000007  0.99999998
8.20  0.00000005  0.99999999
8.25  0.00000004  0.99999999
8.30  0.00000004  0.99999999
8.35  0.00000003  0.99999999
8.40  0.00000002  0.99999999
8.45  0.00000002  1.00000000
8.50  0.00000002  1.00000000
8.55  0.00000001  1.00000000
8.60  0.00000001  1.00000000
8.65  0.00000001  1.00000000
8.70  0.00000001  1.00000000
8.75  0.00000001  1.00000000
8.80  0.00000000  1.00000000
\end{verbatim}
\end{minipage}}
\end{table}


\section*{Acknowledgments}
This research was supported by the Australian Research Council's Centre of Excellence for Mathematics
and Statistics of Complex Systems. The authors would also like
to acknowledge the assistance of Jason Whyte in the preparation of the manuscript.

\appendix

\section*{Appendix}   
\setcounter{section}{1}
\setcounter{equation}{0}

As a technical matter we will need to make use of the Gambier or Folding transformation
for PII. The fundamental domain or Weyl chamber for the PII system can be taken as the interval
$ \alpha \in (-\half,0] $ or $ \alpha \in [0,\half) $, and there exist identities relating the 
transcendents and related quantities at the endpoints of these intervals. In particular, denoting 
the transcendent $ q(t;\alpha) $ and with $ \epsilon^2 = 1 $, $ t = -2^{1/3}s $ we have 
\cite{Gromak_1999}
\begin{equation}
\begin{split}
   -\epsilon\, 2^{1/3} q^2(s;0)
   & = \frac{{\rm d}}{{\rm d}t}q(t;\half\epsilon) - \epsilon\, q^2(t;\half\epsilon) 
	- \half\epsilon\, t \ ,\\
   q(t;\half\epsilon) & =
   \epsilon\, 2^{-1/3} \frac{1}{q(s;0)} \frac{{\rm d}}{{\rm d}s}q(s;0) \ .
\end{split}
\label{PII-ends}
\end{equation}

In addition we will employ the B{\"a}cklund transformation theory of PII as formulated by 
Noumi and Yamada (see \cite{Noumi_2004}) and put to use in the random matrix context by \cite{FW_2001a}.
We define a shift operator corresponding to a translation of the fundamental
weights of the affine Weyl group $ A^{(1)}_1 $,
\begin{equation}
   T_2: \alpha_0 \mapsto \alpha_0-1, \alpha_1 \mapsto \alpha_1+1 \ .
\label{shift-A1}
\end{equation}
The discrete dynamical system generated by the B\"acklund transformations is also integrable and can 
be identified with a discrete Painlev\'e system, discrete dPI. The members of the sequence $ \{q[n]\}_{n=0}^{\infty} $,
generated by the shift operator $ T_2 $ with the parameters $ (\alpha_0-n,\alpha_1+n) $,
are related by a second-order difference equation which is the alternate form of the first discrete 
Painlev\'e equation, a-dPI,
\begin{equation}
   \frac{\alpha + \half + n}{q[n]+q[n+1]} +
   \frac{\alpha - \half + n}{q[n-1]+q[n]} =
   -2q^2[n] - t \ .
\label{adPI-PII}
\end{equation}
The full set of forward and backward difference equations are \cite{Okamoto_1986}
\begin{align}
   q[n-1] & = -q[n] + \frac{\alpha-\half+n}{p[n]-2q[n]^2-t}
   \ ,\label{PII-Bxfm:a} \\
   q[n+1] & = -q[n] - \frac{\alpha+\half+n}{p[n]}
   \ ,\label{PII-Bxfm:b} \\
   p[n-1] & = -p[n]+2q[n]^2+t
   \ ,\label{PII-Bxfm:c} \\
   p[n+1] & = t - p[n] + 2\left( q[n] + \frac{\alpha+\half+n}{p[n]} \right)^2
   \ .\label{PII-Bxfm:d}
\end{align}
In addition one should note that $ H[n+1]=H[n]-q[n+1] $.

\begin{proposition}  \label{prop:notcorollary}
The solution of the second Painlev\'e equation as given by \eqref{PII-Hode} with parameter $ \alpha_1=2 $ 
and boundary condition \eqref{eq:sf2} is generated from the Hastings-McLeod solution by application of the
$ T_2 $ Schlesinger transformation applied twice and the Gambier transformation \eqref{PII-ends}.
\end{proposition}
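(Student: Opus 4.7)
The idea is to start at the Hastings--McLeod solution, which lives at $\alpha=0$ (equivalently $(\alpha_0,\alpha_1)=(1/2,1/2)$), and march along the affine Weyl action to the target parameter $\alpha_1=2$ (equivalently $\alpha=3/2$, $(\alpha_0,\alpha_1)=(-1,2)$). Since the Gambier transformation \eqref{PII-ends} only connects $\alpha=0$ with $\alpha=\pm\tfrac{1}{2}$, while $T_2$ shifts $\alpha_1$ by $+1$, the unique way to reach $\alpha_1=2$ using one Gambier and two applications of $T_2$ is to apply Gambier first with $\epsilon=-1$, landing at $(\alpha_0,\alpha_1)=(1,0)$, and then apply $T_2$ twice to land at $(-1,2)$.

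Concretely, let $q_{\rm HM}(s)$ be the Hastings--McLeod solution, satisfying $\ddot q=2q^3+sq$ and $q_{\rm HM}(s)\sim{\rm Ai}(s)$ as $s\to+\infty$. Step~1: I would use \eqref{PII-ends} with $\epsilon=-1$ to define
\[
q^{(0)}(t):=q(t;-\tfrac{1}{2})=-2^{-1/3}\frac{q_{\rm HM}'(s)}{q_{\rm HM}(s)},\qquad t=-2^{1/3}s,
\]
and read off the conjugate $p^{(0)}$ from the Hamilton equation $\dot q=p-q^2-t/2$. Step~2: iterate the forward B\"acklund shift \eqref{PII-Bxfm:b}, \eqref{PII-Bxfm:d} twice with initial parameters $(\alpha_0,\alpha_1)=(1,0)$, producing $(q^{(2)},p^{(2)})$ at $\alpha_1=2$. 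Step~3: form the Hamiltonian via \eqref{Ham-PII} with $\alpha_1=2$; by construction this $H(t)$ satisfies the $\sigma$-form \eqref{PII-Hode}.

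The remaining task is to verify that this $H$ carries the asymptotic \eqref{eq:sf2}. Since \eqref{PII-Hode} is a second order ODE, the boundary datum \eqref{eq:sf2} determines its solution uniquely, so matching this asymptotic suffices. I would propagate expansions from the Airy tail: $q_{\rm HM}(s)\sim{\rm Ai}(s)$ together with ${\rm Ai}'(s)/{\rm Ai}(s)\sim-\sqrt{s}\,(1+\tfrac{1}{4s}+\cdots)$ give the leading and subleading behaviour of $q^{(0)}(t)$ as $t\to-\infty$, and hence (from the Hamilton equation) of $p^{(0)}$. Plugging these into the rational recursions \eqref{PII-Bxfm:b}, \eqref{PII-Bxfm:d} and carrying two subleading orders yields the asymptotics \eqref{eq:sf3}, \eqref{eq:sf4} of $q^{(2)}(-y)$ and $p^{(2)}(-y)$, whence \eqref{Ham-PII} gives $H(-y)\sim\sqrt{2y}+1/y$, which is precisely \eqref{eq:sf2}.

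The principal obstacle is the subleading asymptotic bookkeeping: leading orders cancel in the product $(2q^2-p+t)p$ appearing in $H$, so the $O(1/y)$ corrections of both $p$ and $q$ must be propagated accurately through both iterates of \eqref{PII-Bxfm:b}, \eqref{PII-Bxfm:d}, and the rational form of these shifts amplifies the risk of arithmetic error. Once the starting Airy expansion is written to the required precision, however, the computation is mechanical and the identification of the Schlesinger--Gambier image with the soft-edge solution characterized in Proposition~\ref{prop:8new} is forced by uniqueness.
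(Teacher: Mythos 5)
Your transformation chain is exactly the paper's: the Gambier transformation \eqref{PII-ends} with $\epsilon=-1$ takes the Hastings--McLeod solution from $\alpha_1=\half$ down to $\alpha_1=0$, after which two applications of $T_2$ reach $\alpha_1=2$, and the verification is, as in the paper, an asymptotic match of $H$ against \eqref{eq:sf2} as $t\to-\infty$. The one substantive difference is how that match is carried out. You propose to push the Airy-tail expansions of $(q^{(0)},p^{(0)})$ through both iterates of \eqref{PII-Bxfm:b}, \eqref{PII-Bxfm:d} to subleading order and then assemble $H$ from \eqref{Ham-PII}, and you correctly flag the cancellation of leading orders as the danger point. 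The paper sidesteps this with the identity $H[n+1]=H[n]-q[n+1]$ stated just above the proposition: since the numerator $\alpha+\half+n$ vanishes at $n=0$ for the base value $\alpha=-\half$, the two shifts telescope to
\[
H(t;\alpha_1=2)=H(t;\alpha_1=0)+\frac{1}{2q^2(t;\alpha_1=0)-p(t;\alpha_1=0)+t},
\]
so that, using $p(t;\alpha_1=0)\sim 0$ and $H(t;\alpha_1=0)\sim 0$, only the asymptotics of $q(t;\alpha_1=0)\sim-2^{-1/3}{\rm Ai}'(-2^{-1/3}t)/{\rm Ai}(-2^{-1/3}t)$ enter, and the intermediate expansions \eqref{eq:sf3}, \eqref{eq:sf4} are never needed. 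Your version would also go through, but the telescoped form is where the bookkeeping should be done if you want to avoid the error amplification you identify as the principal obstacle; both versions share the paper's implicit reliance on the boundary condition \eqref{eq:sf2} singling out the solution of \eqref{PII-Hode}, which you at least state explicitly.
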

\begin{proof}    
Firstly we recall that the parameter for the Hastings-McLeod solution is $ \alpha=0, \alpha_1=1/2 $
whereas we have the case of $ \alpha=3/2, \alpha_1=2 $.
Let $ \tau=-2^{-1/3}t $. The leading, and defining, asymptotics of the Hastings-McLeod solution at 
$ \alpha=0, \alpha_1=1/2 $ as $ \tau\to +\infty $ is (for $ \xi=1 $, Eq. (9.47) of \cite{rmt_Fo})
\begin{equation*}
   q(\tau;\alpha_1=1/2) \mathop{\sim}\limits_{\tau \to \infty} {\rm Ai}(\tau) .
\end{equation*} 
Using the inverse Gambier transformation (\ref{PII-ends}) with $ \epsilon=-1 $ we have the solution as
\begin{equation*}
   q(t;\alpha_1=0) \mathop{\sim}\limits_{t \to -\infty} -2^{-1/3}\frac{{\rm Ai'}(-2^{-1/3}t)}{{\rm Ai}(-2^{-1/3}t)} ,
\end{equation*} 
and therefore $ p(t;\alpha_1=0) \sim 0 $ and $ H(t;\alpha_1=0) \sim 0 $ in this regime. Now using the 
Schlesinger transformations (\ref{PII-Bxfm:b},\ref{PII-Bxfm:d}) we deduce
\begin{align*}
  H(t;\alpha_1=2) & = H(t;\alpha_1=1)+q(t;\alpha_1=1)+\frac{1}{p(t;\alpha_1=1)} ,
\\
   & = H(t;\alpha_1=0)+\frac{1}{2[q(t;\alpha_1=0)]^2-p(t;\alpha_1=0)+t} ,
\\
   & \sim \frac{1}{2[q(t;\alpha_1=0)]^2+t} ,
\\
   & \sim 2^{-1/3}\frac{\left[ {\rm Ai}(-2^{-1/3}t) \right]^2}{\left[ {\rm Ai'}(-2^{-1/3}t) \right]^2+2^{-1/3}t\left[ {\rm Ai}(-2^{-1/3}t) \right]^2} ,
\end{align*}
which is asymptotically equivalent to (\ref{eq:sf2}).
\end{proof}


\bibliographystyle{plain}
\bibliography{moment,random_matrices,nonlinear}
\nopagebreak

\end{document}